\newtheorem{theorem}{Theorem}
\newtheorem{definition}[theorem]{Definition}
\newtheorem{question}[theorem]{Question}
\begin{document}

\title[homogeneous $3$-local representations of the twin groups]{Insights on the homogeneous $3$-local representations of the twin groups} 

\author{Mohamad N. Nasser}

\address{Mohamad N. Nasser\\
         Department of Mathematics and Computer Science\\
         Beirut Arab University\\
         P.O. Box 11-5020, Beirut, Lebanon}
\email{m.nasser@bau.edu.lb}

\begin{abstract}
We provide a complete classification of the homogeneous $3$-local representations of the twin group $T_n$, the virtual twin group $VT_n$, and the welded twin group $WT_n$, for all $n\geq 4$. Beyond this classification, we examine the main characteristics of these representations, particularly their irreducibility and faithfulness. More deeply, we show that all such representations are reducible, and most of them are unfaithful. Also, we find necessary and sufficient conditions of the first two types of the classified representations of $T_n$ to be irreducible in the case $n=4$. The obtained results provide insights into the algebraic structure of these three groups. 
\end{abstract}

\maketitle

\renewcommand{\thefootnote}{}
\footnote{\textit{Keywords and phrases.} Twin Groups, Braid Groups, Irreducibility, Faithfulness.}
\footnote{\textit{Mathematics Subject Classification.} Primary: 20F36.}

\vspace*{-0.4cm}

\section{Introduction} 

A Coxeter group $C$ with $r$ generators $c_1,c_2,\ldots, c_r$ is a fundamental group in Algebra that can be presented as follows.
$$C=\langle c_1,c_2, \ldots, c_r \ |\  c_i^2=1,\  (c_ic_j)^{m_{ij}}=1, \ 1\leq i,j \leq r \rangle,$$
where $m_{ij}=1$ if $i=j$ and $m_{ij}\geq 2$ if $i\neq j$. One of the famous Coxeter groups is the twin group on $n$ strands, often denoted by $T_n$, where $n\geq 2$. The twin group was introduced first by G. Shabat and V. Voevodsky \cite{Shabat1990} and it has appeared in history under different names, such as the flat braid group and the planar braid group; see, for instance, \cite{Khovanov1996, Khovanov1997, Merkov1999, Mosto2020, Naik2020, Bellingeri2024}. The twin group $T_n$ is a Coxeter group with $n-1$ generators $s_1,s_2,\ldots, s_{n-1}$ under the following defining relations.
\begin{align*}
&\hspace{0.27cm} s_i^2=1,\hspace{0.75cm} i=1,2,\ldots,n-1,\\
&s_is_j=s_js_i, \hspace{0.3cm} |i-j|\geq 2.
\end{align*}

The twin group $T_n$ has a geometrical interpretation similar to that for the known braid group, namely $B_n$ \cite{Khovanov1996,Khovanov1997}. The twin group and the braid group both arise in the study of permutations of $n$ strands. They are deeply related in concept but also distinct. For instance, the twin generators $s_i$ of $T_n$ represent a transposition-like move between strands $i$ and $i+1$, while the braid generators $\sigma_i$ of $B_n$ represent an over-crossing of strand $i$ over $i+1$. On the other hand, although both groups share the commutative relations $s_is_j=s_js_i$, $1\leq i \leq n-1$, the relations $s_i^2=1$, $1\leq i \leq n-1$, are not satisfied for the generators of $B_n$, while the relations $\sigma_i\sigma_{i+1}\sigma_i=\sigma_{i+1}\sigma_{i}\sigma_{i+1}$, $1\leq i \leq n-2$, are not satisfied for the generators of $T_n$.

\vspace*{0.2cm}

On the far side, studying extensions of braid groups explores deeper algebraic and geometric properties of braids and their applications in other mathematical and physical branches. In \cite{Bardakov2019}, V. Bardakov, M. Singh, and A. Vesnin introduced the virtual twin group on $n$ strands, namely $VT_n$, and the welded twin group on $n$ strands, namely $WT_n$, in analogy with the known virtual and welded braid groups $VB_n$ and $WB_n$. Both groups $VT_n$ and $WT_n$ are group extensions of $T_n$, and they are generated by two families of generators: the twin generators $s_1,s_2,\ldots, s_{n-1}$ and another family of generators denoted by $\rho_1,\rho_2,\ldots, \rho_{n-1}$. We show the relations between the generators in Section 2.

\vspace*{0.2cm}

Group representations and their characteristics allow us to study the group structure, both from algebraic and geometric points of view. In particular, the existence of a faithful representation of a group solves its word problem. A group representation is said to be faithful if it is injective. Till now, there is no discovered faithful representation of $VT_n$ and $WT_n$ unless in very special cases. Another important characteristic of a representation that helps to discover the group structure is its irreducibility. A representation is said to be irreducible if it has no nontrivial subrepresentation. Otherwise it is reducible.

\vspace*{0.2cm}
  
One of the famous and important types of group representations in the field is called the $k$-local representation. A representation of a group $G$ with a finite number of generators $a_1,a_2,\ldots, a_{n-1}$ to $\mathrm{GL}_n(\mathbb{Z}[t^{\pm 1}])$, where $t$ is indeterminate, is said to be $k$-local if the image of the generator $a_i, \ 1\leq i \leq n-1,$ has the form \vspace*{0.1cm}
$$\left( \begin{array}{c|@{}c|c@{}}
   \begin{matrix}
     I_{i-1} 
   \end{matrix} 
      & 0 & 0 \\
      \hline
    0 &\hspace{0.2cm} \begin{matrix}
   		M_i
   		\end{matrix}  & 0  \\
\hline
0 & 0 & I_{n-i-1}
\end{array} \right), \vspace*{0.1cm}$$ 
where $M_i \in \mathrm{GL}_k(\mathbb{Z}[t^{\pm 1}])$ with $k=m-n+2$ and $I_r$ is the $r\times r$ identity matrix. For example, Burau representation \cite{W.B}, Wada representations of types 1 and 2 \cite{wada}, the standard representation \cite{D.T}, and the $F$-representation \cite{V.B2016} are $k$-local representations of $B_n$ with different degrees $k$, while Lawrence-Krammer-Bigelow representation \cite{Law90} is a non-local representation of $B_n$. In history, many $k$-local representations of the braid group and its extensions has been classified and studied (see for instance \cite{Mikha2013,chreif2024,Mayasi2025,M.M.M,Prab}).

\vspace*{0.2cm}

In \cite{Mayasi20251}, T. Mayassi and M. Nasser did a complete classification and examined the main characteristics of the homogeneous $2$-local representations of the twin group $T_n$. The goal of this article is to generalize the work done by Mayassi and Nasser. More precisely, we aim to classify and examine the main characteristics of the homogeneous $3$-local representations of the twin group $T_n$ and its extensions $VT_n$ and $WT_n$ as well.

\vspace*{0.2cm}

The paper is organized in the following way. Section 2 includes the main definitions and previous results we need in our work. In Section 3, we classify all homogeneous $3$-local representations $\tau: T_n\to \mathrm{GL}_{n+1}(\mathbb{C})$ for all $n\geq 4$ (Theorem \ref{3locTn}) and we prove that all such representations are reducible to the degree $n$ (Theorem \ref{irred1}). In addition, we make a complete study for the irreducibility of the two representations $\tau_1: T_4\to \mathrm{GL}_{5}(\mathbb{C})$ and $\tau_2: T_4\to \mathrm{GL}_{5}(\mathbb{C})$ (Theorems \ref{t11}, \ref{t111}, \ref{t21}, \ref{t22}, and \ref{t222}). Likewise, in Section 4, we classify all homogeneous $3$-local representations $\delta: VT_n\to \mathrm{GL}_{n+1}(\mathbb{C})$ and $\gamma: WT_n\to \mathrm{GL}_{n+1}(\mathbb{C})$ for all $n\geq 4$ (Theorems \ref{3locVTn} and \ref{3locWTn}). Also, we prove that all such representations are reducible to the degree $n$ (Theorems \ref{irred12} and \ref{irred122}), and we show that most of such representations are unfaithful (Theorems \ref{fa3locVTn} and \ref{fa3locWTn}).

\section{Preliminaries} 

We start this section by introducing the presentation of the braid group $B_n$ introduced by E. Artin in 1926 \cite{E.A}.

\begin{definition} \label{braid}
\cite{E.A} The braid group $B_n, n\geq 2$, is defined by its generators $\sigma_1,\sigma_2,\ldots,\sigma_{n-1}$ that satisfy the following relations.
\begin{equation} \label{eqs1}
\ \ \ \ \sigma_i\sigma_{i+1}\sigma_i = \sigma_{i+1}\sigma_i\sigma_{i+1}, \hspace{0.45cm} i=1,2,\ldots,n-2,
\end{equation}
\begin{equation} \label{eqs2}
\sigma_i\sigma_j = \sigma_j\sigma_i, \hspace{1.45cm} |i-j|\geq 2.
\end{equation}
\end{definition}

\vspace*{0.1cm}

We then introduce the presentation of the twin group $T_n$ introduced by G. Shabat and V. Voevodsky in 1990 \cite{Shabat1990}. The generators of $T_n$ are called involutions according to the first type of relations given in the following definition.

\vspace*{0.1cm}

\begin{definition}\cite{Shabat1990}
The twin group $T_n, n\geq 2$, is defined by its generators $s_1,s_2,\ldots,s_{n-1}$ that satisfy the following relations.
\begin{equation} \label{eqs1}
\hspace{1.46cm} s_i^2=1,\hspace{0.95cm} i=1,2,\ldots,n-2,
\end{equation}
\begin{equation} \label{eqs2}
s_is_j=s_js_i, \hspace{0.55cm} |i-j|\geq 2.
\end{equation}
\end{definition}

\vspace*{0.1cm}

Note that we have the following particular cases.
\begin{itemize}
\item[•] $T_2=\langle s_1 \ |\ s_1^2=1 \rangle =\mathbb{Z}_2$ is the cyclic group of order $2$.
\item[•] $T_3=\langle s_1,s_2 \ |\ s_1^2=s_2^2=1 \rangle=\mathbb{Z}_2*\mathbb{Z}_2$ is the infinite dihedral group.
\end{itemize}

\vspace*{0.1cm}

We now introduce the presentation of the virtual twin group $VT_n$ introduced by V. Bardakov, M. Singh, and A. Vesnin in 2019 \cite{Bardakov2019}.

\vspace*{0.1cm}

\begin{definition}\cite{Bardakov2019}
The virtual twin group $VT_n, n\geq 2,$ is an extension of $T_n$ that is generated by the generators $s_1,s_2, \ldots,s_{n-1}$ of $T_n$ besides the generators $\rho_1,\rho_2, \ldots, \rho_{n-1}$. In addition to the relations (\ref{eqs1}) and (\ref{eqs2}) of $T_n$, the generators $s_i$ and $\rho_i$, $1\leq i \leq n-1$, of $VT_n$ satisfy the following relations.
\begin{equation} \label{eqs3}
\ \ \ \ \rho_i\rho_{i+1}\rho_i = \rho_{i+1}\rho_i\rho_{i+1}, \hspace{0.6cm} i=1,2,\ldots,n-2,
\end{equation}
\begin{equation} \label{eqs4}
\rho_i\rho_j = \rho_j\rho_i ,\hspace{1.55cm} |i-j|\geq 2,
\end{equation}
\begin{equation} \label{eqs5}
\ \ \ \ \ \ \ \ \ \ \ \ \rho_i^2 = 1 ,\hspace{2cm} i=1,2,\ldots,n-1,
\end{equation}
\begin{equation} \label{eqs6}
s_i\rho_j=\rho_js_i ,\hspace{1.6cm} |i-j|\geq 2,
\end{equation}
\begin{equation} \label{eqs7}
\ \ \ \ \rho_i\rho_{i+1}s_i=s_{i+1}\rho_i\rho_{i+1}, \hspace{0.65cm} i=1,2,\ldots,n-2. \vspace*{0.1cm}
\end{equation}
\end{definition}

\vspace*{0.1cm}

We give now the definition of the welded twin group $WT_n$ introduced also by V. Bardakov, M. Singh, and A. Vesnin in 2019 \cite{Bardakov2019}.

\vspace*{0.1cm}

\begin{definition}\cite{Bardakov2019}
The welded twin group $WT_n, n\geq 2,$ is an extension of $T_n$ that is defined as the quotient of $VT_n$ by adding the following relations.
\begin{equation} \label{eqs8}
\ \ \ \ \rho_is_{i+1}s_i=s_{i+1}s_i\rho_{i+1}, \hspace{0.6cm} i=1,2,\ldots,n-2.
\end{equation}
\end{definition}

\vspace*{0.1cm}

In what follows, we give the concept of $k$-local representations of a group $G$ with a finite number of generators introduced by M. Nasser in 2025 \cite{Nas20251}.

\vspace*{0.1cm}

\begin{definition}
Let $G$ be a group with generators $a_1,a_2,\ldots,a_{n-1}$. A representation $\theta: G \rightarrow \mathrm{GL}_{m}(\mathbb{Z}[t^{\pm 1}])$ is said to be $k$-local if it is of the form
$$\theta(a_i) =\left( \begin{array}{c|@{}c|c@{}}
   \begin{matrix}
     I_{i-1} 
   \end{matrix} 
      & 0 & 0 \\
      \hline
    0 &\hspace{0.2cm} \begin{matrix}
   		M_i
   		\end{matrix}  & 0  \\
\hline
0 & 0 & I_{n-i-1}
\end{array} \right) \hspace*{0.2cm} \text{for} \hspace*{0.2cm} 1\leq i\leq n-1,$$ 
where $M_i \in \mathrm{GL}_k(\mathbb{Z}[t^{\pm 1}])$ with $k=m-n+2$ and $I_r$ is the $r\times r$ identity matrix. The representation $\theta$ is said to be homogeneous if all the matrices $M_i$ are equal.
\end{definition}

\vspace*{0.1cm}

Remark that if $G'$ is a group with $2(n-1)$ generators $a_1,a_2,\ldots,a_{n-1}$ and $b_1,b_2,\ldots,b_{n-1}$, then the concept of $k$-local representations could be extended in the following way.

\vspace*{0.1cm}

\begin{definition}
A $k$-local representation $\theta: G' \rightarrow \mathrm{GL}_{m}(\mathbb{Z}[t^{\pm 1}])$ is a representation of the form
$$\theta(a_i) =\left( \begin{array}{c|@{}c|c@{}}
   \begin{matrix}
     I_{i-1} 
   \end{matrix} 
      & 0 & 0 \\
      \hline
    0 &\hspace{0.2cm} \begin{matrix}
   		M_i
   		\end{matrix}  & 0  \\
\hline
0 & 0 & I_{n-i-1}
\end{array} \right) \text{ and \ } \theta(b_i) =\left( \begin{array}{c|@{}c|c@{}}
   \begin{matrix}
     I_{i-1} 
   \end{matrix} 
      & 0 & 0 \\
      \hline
    0 &\hspace{0.2cm} \begin{matrix}
   		N_i
   		\end{matrix}  & 0  \\
\hline
0 & 0 & I_{n-i-1}
\end{array} \right) $$
for $1\leq i\leq n-1,$ where $M_i,N_i \in \mathrm{GL}_k(\mathbb{Z}[t^{\pm 1}])$ with $k=m-n+2$ and $I_r$ is the $r\times r$ identity matrix. In this case, $\theta$ is homogeneous if all the matrices $M_i$ are equal and all the matrices $N_i$ are equal.
\end{definition}

\vspace*{0.1cm}

The next two definitions are addressed here as examples of famous known $k$-local representations of the braid group $B_n$ of different degrees $k$. The first representation was introduced by W. Burau in 1936 \cite{W.B}, and the second representation was introduced by V. Bardakov and P. Bellingeri in 2016 \cite{V.B2016}.

\vspace*{0.1cm}

\begin{definition} \cite{W.B} \label{defBurau}
The Burau representation $\rho_B: B_n\rightarrow \mathrm{GL}_n(\mathbb{Z}[t^{\pm 1}])$, where $t$ is indeterminate, is the representation defined by
$$\sigma_i\rightarrow \left( \begin{array}{c|@{}c|c@{}}
   \begin{matrix}
     I_{i-1} 
   \end{matrix} 
      & 0 & 0 \\
      \hline
    0 &\hspace{0.2cm} \begin{matrix}
   	1-t & t\\
   	1 & 0\\
\end{matrix}  & 0  \\
\hline
0 & 0 & I_{n-i-1}
\end{array} \right) \hspace*{0.2cm} \text{for} \hspace*{0.2cm} 1\leq i\leq n-1.$$ 
\end{definition}

\vspace*{0.1cm}

\begin{definition} \cite{V.B2016} \label{defF}
The $F$-representation $\rho_F: B_n \rightarrow \mathrm{GL}_{n+1}(\mathbb{Z}[t^{\pm 1}])$, where $t$ is indeterminate, is the representation defined by
$$\sigma_i\rightarrow \left( \begin{array}{c|@{}c|c@{}}
   \begin{matrix}
     I_{i-1} 
   \end{matrix} 
      & 0 & 0 \\
      \hline
    0 &\hspace{0.2cm} \begin{matrix}
   		1 & 1 & 0 \\
   		0 &  -t & 0 \\   		
   		0 &  t & 1 \\
   		\end{matrix}  & 0  \\
\hline
0 & 0 & I_{n-i-1}
\end{array} \right) \hspace*{0.2cm} \text{for} \hspace*{0.2cm} 1\leq i\leq n-1.$$ 
\end{definition}

\vspace*{0.1cm}

From the shapes of the previous two representations, we see that Burau representation is a homogeneous $2$-local, while the $F$-representation is a homogeneous $3$-local. For more information on the characteristics of these two representations, see \cite{E.F} and \cite{M.N2024} respectively.

\vspace*{0.15cm}

On the other side, regarding $k$-local representations of the twin group $T_n$, M. Nasser constructed two $2$-local representations of the twin group $T_n$ and studied their irreducibility and faithfulness in many cases \cite{M.N.twin}. In the following two definitions we introduce these two representations, and we call them $N_1$ and $N_2$ representations.

\vspace*{0.1cm}
 
\begin{definition} \cite{M.N.twin}
The $N_1$-representation $\eta_1: T_n \rightarrow \mathrm{GL}_n(\mathbb{Z}[t^{\pm 1}])$, where $t$ is indeterminate, is the representation defined by
$$s_i\rightarrow \left( \begin{array}{c|@{}c|c@{}}
   \begin{matrix}
     I_{i-1} 
   \end{matrix} 
      & 0 & 0 \\
      \hline
    0 &\hspace{0.2cm} \begin{matrix}
   	1-t & t\\
   	2-t & t-1\\
\end{matrix}  & 0  \\
\hline
0 & 0 & I_{n-i-1}
\end{array} \right) \text{ for } 1\leq i \leq n-1.$$ 
\end{definition}

\vspace*{0.1cm}
 
\begin{definition}\cite{M.N.twin}
The $N_2$-representation $\eta_2: T_n \rightarrow \mathrm{GL}_n(\mathbb{Z}[t^{\pm 1}])$, where $t$ is indeterminate, is the representation defined by
$$s_i \rightarrow \left( \begin{array}{c|@{}c|c@{}}
   \begin{matrix}
     I_{i-1} 
   \end{matrix} 
      & 0 & 0 \\
      \hline
    0 &\hspace{0.2cm} \begin{matrix}
   	0 & f(t)\\
   	f^{-1}(t) & 0\\
\end{matrix}  & 0  \\
\hline
0 & 0 & I_{n-i-1}
\end{array} \right) \text{ for } 1\leq i \leq n-1,$$
where $f(t)$ is invertible in $\mathbb{Z}[t^{\pm 1}]$ and $f^{-1}(t)=\frac{1}{f(t)}$.
\end{definition}

\vspace*{0.15cm}

One of the natural questions that could be addressed regarding $k$-local representations of $T_n$ is given in the following.

\begin{question} \label{qqq}
Let $\tau: T_n \rightarrow \mathrm{GL}_{m}(\mathbb{Z}[t^{\pm 1}])$ be a $k$-local representation of $T_n$. What are the possible forms of $\tau$? And what are their characteristics?
\end{question}

In \cite{Mayasi20251}, T. Mayassi and M. Nasser answered this question in the case $k=2$. In the next section, we answer this question in the case $k=3$. 

\vspace*{0.2cm}

\section{On the $3$-local representations of $T_n$} 

In this section, we classify all homogeneous $3$-local representations of $T_n$ for all $n\geq 4$. Moreover, we prove that they are all reducible to the degree $n$. In addition, we completely study the irreducibility of the first two types in the case $n=4$.

\subsection{Classification of the $3$-local representations of $T_n$}
We start by classifying all homogeneous $3$-local representations of $T_n$ for all $n\geq 4$.

\vspace*{0.1cm}

\begin{theorem}\label{3locTn}
Consider $n\geq 4$ and let $\tau: T_n\to \mathrm{GL}_{n+1}(\mathbb{C})$ be a homogeneous $3$-local representation of $T_n$. Then, $\tau$ is equivalent to one of the following eleven representations $\tau_j$, $1\leq j\leq 11$, where $$\tau_j(s_i)=\left( \begin{array}{c|@{}c|c@{}}
   \begin{matrix}
     I_{i-1} 
   \end{matrix} 
      & 0 & 0 \\
      \hline
    0 &\hspace{0.2cm} \begin{matrix}
   		M_j
   		\end{matrix}  & 0  \\
\hline
0 & 0 & I_{n-i-1}
\end{array} \right)$$
for all $1\leq i \leq n-1$ and the matrices $M_j$'s are given below.
\begin{itemize}
\item[(1)] $M_1=\begin{pmatrix}
     1& 0 & 0\\
     d & -1 & f\\
     0 & 0 & 1
   \end{pmatrix}$, where $d,f \in \mathbb{C}.$
\item[(2)] $M_2=\begin{pmatrix}
     1& 0 & 0\\
     0 & -\sqrt{1-fh} & f\\
     0 & h & \sqrt{1-fh}
   \end{pmatrix}$, where $h,f \in \mathbb{C}.$
\item[(3)] $M_3=\begin{pmatrix}
     1& 0 & 0\\
     0 & \sqrt{1-fh} & f\\
     0 & h & -\sqrt{1-fh}
   \end{pmatrix}$, where $h,f \in \mathbb{C}.$
\item[(4)] $M_4=\begin{pmatrix}
     1& b & 0\\
     0 & -1 & 0\\
     0 & h & 1
   \end{pmatrix}$, where $b,h \in \mathbb{C}.$   
\item[(5)] $M_5=\begin{pmatrix}
     -\sqrt{1-bd}& b & 0\\
     d & \sqrt{1-bd} & 0\\
     0 & 0 & 1
   \end{pmatrix}$, where $b,d \in \mathbb{C}.$ 
\item[(6)] $M_6=\begin{pmatrix}
     \sqrt{1-bd}& b & 0\\
     d & -\sqrt{1-bd} & 0\\
     0 & 0 & 1
   \end{pmatrix}$, where $b,d \in \mathbb{C}.$ 
\item[(7)] $M_7=\begin{pmatrix}
     1& 0 & 0\\
     0 & -1 & 0\\
     0 & 0 & -1
   \end{pmatrix}$.  
\item[(8)] $M_8=\begin{pmatrix}
     -1& 0 & 0\\
     0 & -1 & 0\\
     0 & 0 & 1
   \end{pmatrix}$.  
\item[(9)] $M_9=\begin{pmatrix}
     1& 0 & 0\\
     0 & 1 & 0\\
     0 & 0 & 1
   \end{pmatrix}$. 
\item[(10)] $M_{10}=\begin{pmatrix}
     -1& 0 & 0\\
     0 & -1 & 0\\
     0 & 0 & -1
   \end{pmatrix}$. 
\item[(11)] $M_{11}=\begin{pmatrix}
     -1& 0 & 0\\
     0 & 1 & 0\\
     0 & 0 & -1
   \end{pmatrix}$.    
\end{itemize}
\end{theorem}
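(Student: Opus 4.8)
The plan is to exploit that, by homogeneity, every $\tau(s_i)$ is the identity matrix with one common $3\times 3$ block $M=(m_{pq})_{1\le p,q\le 3}$ inserted in rows and columns $i,i+1,i+2$, and to turn the defining relations of $T_n$ into conditions on $M$. The relation $s_i^2=1$ is equivalent to $M^2=I_3$, which already forces $\det M=\pm 1$, so invertibility of $\tau(s_i)$ is automatic once $M^2=I_3$ holds. Among the commuting relations $s_is_j=s_js_i$ with $|i-j|\ge 2$, those with $|i-j|\ge 3$ involve blocks occupying disjoint rows and columns and hence hold automatically, while for $|i-j|=2$, say $j=i+2$, the blocks of $\tau(s_i)$ and $\tau(s_{i+2})$ overlap in exactly one coordinate, so the relation becomes a nontrivial polynomial system in the $m_{pq}$. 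This is where the hypothesis $n\ge 4$ enters: it guarantees that such a pair of generators exists (e.g. $s_1,s_3$ inside $(n+1)\times(n+1)$ matrices). Writing $\tau(s_1)$ and $\tau(s_3)$ out explicitly and equating $\tau(s_1)\tau(s_3)=\tau(s_3)\tau(s_1)$, one reads off at once that the corner entries vanish, $m_{13}=m_{31}=0$, together with the ``pinning'' relations
$$m_{12}m_{23}=m_{21}m_{32}=0,\qquad m_{23}(m_{11}-1)=m_{32}(m_{11}-1)=0,\qquad m_{12}(m_{33}-1)=m_{21}(m_{33}-1)=0.$$

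The core of the proof is then to solve this system jointly with $M^2=I_3$. Since $M$ is an involution it is diagonalizable with spectrum in $\{1,-1\}$, so $\mathrm{tr}(M)\in\{-3,-1,1,3\}$, and I would organize the analysis by this value. The extreme traces give $M=I_3$ and $M=-I_3$, i.e. $M_9$ and $M_{10}$. For $\mathrm{tr}(M)=-1$ a short computation with $M^2=I_3$ and the pinning relations shows every off-diagonal entry must vanish: a nonzero entry among $m_{12},m_{21},m_{23},m_{32}$ would, via pinning, force one of the two $2\times 2$ diagonal sub-blocks in positions $\{1,2\}$ or $\{2,3\}$ to be an involution of trace $-2$, hence equal to $-I_2$, contradicting that the entry is nonzero; so $M$ is one of the sign patterns $\mathrm{diag}(1,-1,-1)$, $\mathrm{diag}(-1,-1,1)$, $\mathrm{diag}(-1,1,-1)$, i.e. $M_7,M_8,M_{11}$. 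The substantive case is $\mathrm{tr}(M)=1$: distinguishing which of $m_{12},m_{21},m_{23},m_{32}$ are nonzero (the relations $m_{12}m_{23}=0$, $m_{21}m_{32}=0$ severely limit the admissible patterns), the pinning relations fix the extreme diagonal entries, and $M^2=I_3$ then either pins the middle diagonal entry to $-1$ — giving the family $M_1$ (with $m_{21},m_{23}$ free) and, in the transposed pattern, $M_4$ (with $m_{12},m_{32}$ free) — or forces a trace-$0$ involution $2\times 2$ block in positions $\{1,2\}$ or $\{2,3\}$, whose diagonal entries are $\pm\sqrt{1-bd}$ respectively $\pm\sqrt{1-fh}$, the two sign choices producing $M_5/M_6$ respectively $M_2/M_3$; the remaining diagonal trace-$1$ patterns occur as degenerate specializations of these families. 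For the converse I would check directly that each $M_j$, $1\le j\le 11$, satisfies $M_j^2=I_3$ and the pinning relations, a one-line verification per case, so that each $\tau_j$ is a genuine homogeneous $3$-local representation of $T_n$.

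I expect the main obstacle to be the bookkeeping in the $\mathrm{tr}(M)=1$ case: one must enumerate all admissible vanishing patterns among $m_{12},m_{21},m_{23},m_{32}$, solve the $2\times 2$ involution condition correctly (this is the source of the square roots and of the twin forms $M_2/M_3$ and $M_5/M_6$), and verify that the various degenerate specializations of different families coincide so that the eleven-item list is genuinely exhaustive and the ``equivalence'' bookkeeping is consistent. A secondary point to be careful about is that the ``staircase'' patterns (both $m_{12},m_{23}\ne 0$, or both $m_{21},m_{32}\ne 0$) are excluded precisely by $m_{12}m_{23}=m_{21}m_{32}=0$, which is exactly what prevents further families from arising.
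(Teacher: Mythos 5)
Your proposal is correct and follows essentially the same route as the paper: reduce to the single block $M$, impose $M^2=I_3$ together with the commutativity of $\tau(s_1)$ and $\tau(s_3)$ (which yields exactly the vanishing of the corner entries plus your pinning relations, i.e.\ the paper's equations (11), (15) and (18)--(21)), and then run a case analysis on the resulting polynomial system. The only difference is organizational --- you branch on $\mathrm{tr}(M)\in\{-3,-1,1,3\}$ whereas the paper branches on $bf=0$ (that is, $m_{12}=0$ or $m_{23}=0$) and then on the signs of the diagonal entries --- and both bookkeeping schemes yield the same eleven families.
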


\begin{proof}
Set $$\tau(s_i)=\left( \begin{array}{c|@{}c|c@{}}
   \begin{matrix}
     I_{i-1} 
   \end{matrix} 
      & 0 & 0 \\
      \hline
    0 &\hspace{0.2cm} \begin{matrix}
   		M
   		\end{matrix}  & 0  \\
\hline
0 & 0 & I_{n-i-1}
\end{array} \right)$$
for all $1\leq i \leq n-1$, where $$M=\begin{pmatrix}
     a& b & c\\
     d & e & f\\
     g & h & i
   \end{pmatrix}$$ with $a,b,c,d,e,f,g,h,i \in \mathbb{C}.$ Note that $\tau$ preserves the relations of $T_n$, that is
\begin{align*}
&\hspace{0.615cm} \tau(s_i)^2=1,\hspace{1.68cm} i=1,2,\ldots,n-1,\\
&\tau(s_i)\tau(s_j)=\tau(s_j)\tau(s_i), \hspace{0.3cm} |i-j|\geq 2.
\end{align*}
Remark that the only relations we need are: $\tau(s_1)^2=1$ and $\tau(s_1)\tau(s_3)=\tau(s_3)\tau(s_1)$ and all other relations give similar results. Applying these two relations gives a system of twenty four equations with nine unknowns. Two of these equations imply directly that $c=g=0$. The following is the new system of equations after substituting $c$ and $g$ by $0$.
\begin{equation}\label{eq9}
-1+a^2+bd=0,
\end{equation}
\begin{equation}\label{eq10}
ab+be=0,
\end{equation}
\begin{equation}\label{eq11}
bf=0,
\end{equation}
\begin{equation}\label{eq12}
ad+de=0,
\end{equation}
\begin{equation}\label{eq13}
-1+bd+e^2+fh=0,
\end{equation}
\begin{equation}\label{eq14}
ef+fi=0,
\end{equation}
\begin{equation}\label{eq15}
dh=0,
\end{equation}
\begin{equation}\label{eq16}
eh+hi=0,
\end{equation}
\begin{equation}\label{eq17}
-1+fh+i^2=0,
\end{equation}
\begin{equation}\label{eq18}
-f+af=0,
\end{equation}
\begin{equation}\label{eq19}
h-ah=0,
\end{equation}
\begin{equation}\label{eq20}
-b+bi=0,
\end{equation}
\begin{equation}\label{eq21}
d-di=0.
\end{equation}
Equation (\ref{eq11}) implies that $b=0$ or $f=0$ which leads to the following cases.
\begin{itemize}
\item[(a)] \underline{The case $b=0$}. In this case, we have from Equations (\ref{eq9}), (\ref{eq13}), and (\ref{eq17}) that $a^2=1$ and $e^2=i^2=1-fh$. We then consider the following subcases.
\begin{itemize}
\item[(i)] If $h=0$ then we get $a^2=e^2=i^2=1$ and so we have the following.
\begin{itemize}
\item[•] If $a=1, e=1,$ and $i=1$ then we get from Equations (\ref{eq12}) and (\ref{eq14}) that $d=f=0$ and so $\tau$ is equivalent to $\tau_9$.
\item[•] If $a=1, e=-1,$ and $i=1$ then $\tau$ is equivalent to $\tau_1$.
\item[•] If $a=1, e=-1,$ and $i=-1$ then we get from Equations (\ref{eq14}) and (\ref{eq21}) that $d=f=0$ and so $\tau$ is equivalent to $\tau_7$.
\item[•] If $a=1, e=1,$ and $i=-1$ then we get from Equation (\ref{eq21}) that $d=0$ and so $\tau$ is equivalent to a special case of $\tau_3$.
\item[•] If $a=-1, e=1,$ and $i=1$ then we get from Equation (\ref{eq18}) that $f=0$ and so $\tau$ is equivalent to a special case of $\tau_5$.
\item[•] If $a=-1, e=-1,$ and $i=1$ then we get from Equations (\ref{eq12}) and (\ref{eq18}) that $d=f=0$ and so $\tau$ is equivalent to $\tau_8$.
\item[•] If $a=-1, e=-1,$ and $i=-1$ then we get from Equations (\ref{eq18}) and (\ref{eq21}) that $d=f=0$ and so $\tau$ is equivalent to $\tau_{10}$.
\item[•] If $a=-1, e=1,$ and $i=-1$ then we get from Equations (\ref{eq18}) and (\ref{eq21}) that $d=f=0$ and so $\tau$ is equivalent to $\tau_{11}$.
\end{itemize}
\item[(ii)] If $h\neq 0$ then we get from Equations (\ref{eq15}), (\ref{eq16}), and (\ref{eq19}) that $d=0$, $e=-i$, and $a=1$ and so $\tau$ is equivalent to special cases of $\tau_2$ or $\tau_3$.
\end{itemize}
\item[(b)] \underline{The case $f=0$}. In this case, we have from Equations (\ref{eq9}), (\ref{eq13}), and (\ref{eq17}) that $a^2=e^2=1-bd$ and $i^2=1$. We then consider the following subcases.
\begin{itemize}
\item[(i)] If $d=0$ then we get $a^2=e^2=i^2=1$ and so we have the following.
\begin{itemize}
\item[•] If $a=1, e=1,$ and $i=1$ then we get from Equations (\ref{eq10}) and (\ref{eq16}) that $b=h=0$ and so $\tau$ is equivalent to $\tau_9$.
\item[•] If $a=1, e=-1,$ and $i=1$ then $\tau$ is equivalent to $\tau_4$.
\item[•] If $a=1, e=-1,$ and $i=-1$ then we get from Equations (\ref{eq16}) and (\ref{eq20}) that $b=h=0$ and so $\tau$ is equivalent to $\tau_7$.
\item[•] If $a=1, e=1,$ and $i=-1$ then we get from Equation (\ref{eq10}) that $b=0$ and so $\tau$ is equivalent to a special case of $\tau_3$.
\item[•] If $a=-1, e=1,$ and $i=1$ then we get from Equation (\ref{eq16}) that $h=0$ and so $\tau$ is equivalent to a special case of $\tau_5$.
\item[•] If $a=-1, e=-1,$ and $i=1$ then we get from Equations (\ref{eq10}) and (\ref{eq19}) that $b=h=0$ and so $\tau$ is equivalent to $\tau_8$.
\item[•] If $a=-1, e=-1,$ and $i=-1$ then we get from Equations (\ref{eq10}) and (\ref{eq16}) that $b=h=0$ and so $\tau$ is equivalent to $\tau_{10}$.
\item[•] If $a=-1, e=1,$ and $i=-1$ then we get from Equations (\ref{eq19}) and (\ref{eq20}) that $b=h=0$ and so $\tau$ is equivalent to $\tau_{11}$.
\end{itemize}
\item[(ii)] If $d \neq 0$ then we get from Equations (\ref{eq12}), (\ref{eq15}), and (\ref{eq21}) that $h=0$, $a=-e$, and $i=1$ and so $\tau$ is equivalent to special cases of $\tau_5$ or $\tau_6$.
\end{itemize}
\end{itemize}
\end{proof}

\subsection{On the irreducibility of the $3$-local representations of $T_n$}
In this subsection, we prove that every homogeneous $3$-local representation of $T_n$ is reducible for all $n\geq 4$.
 
\begin{theorem}\label{irred1}
Consider $n\geq 4$ and let $\tau: T_n\to \mathrm{GL}_{n+1}(\mathbb{C})$ be a homogeneous $3$-local representation of $T_n$. Then, $\tau$ is reducible.
\end{theorem}
\begin{proof}
According to Theorem \ref{3locTn}, we know that $\tau$ is equivalent to one of the representations $\tau_j, 1\leq j \leq 11$, and so we consider the following cases.
\begin{itemize}
\item[(1)] In the case $\tau$ is equivalent to $\tau_1$ we have two subcases.
\begin{itemize}
\item[•] If $f\neq 0$ then we see that the vector $(1,x,x^2,\ldots,x^n)^T$, where $x=\frac{1-\sqrt{1-df}}{f}$ and $T$ is the transpose, is invariant under $\tau_1(s_i)$ for all $1\leq i \leq n-1$. Thus, $\tau_1$ is reducible and so $\tau$ is reducible.
\item[•] If $f=0$ then we see that the vector $(0,\ldots,0,1)^T$ is invariant under $\tau_1(s_i)$ for all $1\leq i \leq n-1$. Thus, $\tau_1$ is reducible and so $\tau$ is reducible.
\end{itemize}
\item[(2)] In the case $\tau$ is equivalent to $\tau_j, 2\leq j \leq 4,$ we see that the vector $(1,0,\ldots,0)^T$ is invariant under $\tau_j(s_i)$ for all $1\leq i \leq n-1$. Thus, $\tau_j$ is reducible and so $\tau$ is reducible.
\item[(3)] In the case $\tau$ is equivalent to $\tau_j, 5\leq j \leq 6,$ we see that the vector $(0,\ldots,0,1)^T$ is invariant under $\tau_j(s_i)$ for all $1\leq i \leq n-1$. Thus, $\tau_j$ is reducible and so $\tau$ is reducible.
\item[(4)] If $\tau$ is equivalent to $\tau_j, 7\leq j \leq 11,$ then clearly $\tau$ is reducible.
\end{itemize}
\end{proof}

Notice that, from the shapes of the representations $\tau_j, 1\leq j \leq 11,$ we can see that many representations share similar shapes. So, in the next two subsections, we completely study the irreducibility of the first two representations $\tau_1$ and $\tau_2$ in the case $n=4$, and the work would be similar for the remaining representations.

\subsection{The irreducibility of $\tau_1$ in the case $n=4$}

In this subsection, we answer the question of the irreducibility of the representation $\tau_1$ given in Theorem \ref{3locTn} in the case $n=4$. We take here $f\neq 0$ in $\tau_1$ since the case $f=0$ is straightforward. We start with the following theorem.

\begin{theorem} \label{t11}
Consider the representation $\tau_1: T_4\to \mathrm{GL}_{5}(\mathbb{C})$ given in Theorem \ref{3locTn} with $f\neq 0$. Then, $\tau_1$ has a composition factor, namely $\tau_1^{(1)}:T_4 \to \mathrm{GL}_3(\mathbb{C})$, which is given by acting on the generators of $T_4$, $s_i, 1\leq i \leq 3,$ as follows. 
$$\tau_1^{(1)}(s_1)=\begin{pmatrix}
     -1& f & 0\\
     0 & 1 & 0\\
     0 & 0 & 1
   \end{pmatrix},$$
$$\tau_1^{(1)}(s_2)=\begin{pmatrix}
     1& 0 & 0\\
     d & -1 & f\\
     0 & 0 & 1
   \end{pmatrix}, $$
and
$$\tau_1^{(1)}(s_3)=\begin{pmatrix}
     1& 0 & 0\\
     0 & 1 & 0\\
     0 & d & -1
   \end{pmatrix}.$$
\end{theorem}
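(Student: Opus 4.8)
The plan is to realize $\tau_1^{(1)}$ directly as a subrepresentation of $\tau_1$, rather than going through the $1$-dimensional invariant line used in the proof of Theorem \ref{irred1}. Writing $e_1,\dots,e_5$ for the standard basis of $\mathbb{C}^5$, I would show that the $3$-dimensional ``interior'' subspace $W=\mathrm{span}_{\mathbb{C}}\{e_2,e_3,e_4\}$ is invariant under $\tau_1(s_1),\tau_1(s_2),\tau_1(s_3)$, and that the matrix of the restriction $\tau_1(s_i)|_W$ in the ordered basis $(e_2,e_3,e_4)$ is exactly $\tau_1^{(1)}(s_i)$.

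First I would record, from Theorem \ref{3locTn} with $n=4$, that $s_i$ acts via $M_1$ on the coordinates $i,i+1,i+2$ and as the identity elsewhere: $s_1$ on $\{1,2,3\}$, $s_2$ on $\{2,3,4\}$, $s_3$ on $\{3,4,5\}$. The crucial structural fact is that the only nonzero off-diagonal entries of $M_1$ are the $(2,1)$-entry $d$ and the $(2,3)$-entry $f$; in particular the entries $b,c,g,h$ of the generic $3\times 3$ block all vanish for $M_1$. A one-line inspection of columns then gives $\tau_1(s_i)W\subseteq W$: for $s_1$, the images of $e_2,e_3$ lie in $\mathrm{span}\{e_2,e_3\}$ since the $e_1$-components of the second and third columns of $M_1$ are $b=c=0$, and $e_4$ is fixed; for $s_2$, the images of $e_2,e_3,e_4$ automatically stay in $W$ because $W$ is precisely the block on which $M_1$ acts; for $s_3$, the images of $e_3,e_4$ lie in $\mathrm{span}\{e_3,e_4\}$ since the $e_5$-components of the first and second columns of $M_1$ are $g=h=0$, while $e_2$ is fixed.

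Next I would read off the three restricted matrices. One computes $\tau_1(s_1)e_2=-e_2$, $\tau_1(s_1)e_3=fe_2+e_3$, $\tau_1(s_1)e_4=e_4$; then $\tau_1(s_2)e_2=e_2+de_3$, $\tau_1(s_2)e_3=-e_3$, $\tau_1(s_2)e_4=fe_3+e_4$; and $\tau_1(s_3)e_2=e_2$, $\tau_1(s_3)e_3=e_3+de_4$, $\tau_1(s_3)e_4=-e_4$. Hence the matrices of $\tau_1(s_i)|_W$ in the basis $(e_2,e_3,e_4)$ are exactly the three displayed matrices $\tau_1^{(1)}(s_i)$, so $\tau_1|_W\cong\tau_1^{(1)}$. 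Thus $\tau_1^{(1)}$ is a subrepresentation of $\tau_1$, and refining $0\subset W\subset\mathbb{C}^5$ to a composition series exhibits $\tau_1^{(1)}$ as the asserted (sub-quotient) factor of $\tau_1$, its own irreducibility being taken up separately in the theorems that follow.

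There is essentially no hard step: the argument reduces to the observation that $M_1$ has vanishing corner entries together with bookkeeping of which coordinate block each $s_i$ acts on. The only point demanding a little care is the indexing — that $W$ is the span of the three \emph{interior} coordinates $e_2,e_3,e_4$ (not $e_1,e_2,e_3$ or $e_3,e_4,e_5$) — and matching the restriction column-by-column against the stated $\tau_1^{(1)}(s_i)$. I would also note that the hypothesis $f\neq 0$ is not actually used in this statement; it is inherited from the standing assumption of the subsection and becomes relevant only for the irreducibility analysis of $\tau_1^{(1)}$.
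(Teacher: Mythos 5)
Your proof is correct, and it takes a cleaner route than the paper. The paper first changes basis to $\{X,e_2,e_3,e_4,e_5\}$ with $X=(1,x,x^2,x^3,x^4)^T$, $x=\frac{1-\sqrt{1-df}}{f}$, passes to the quotient by the invariant line $\langle X\rangle$ (deleting the first row and column), and then restricts to the invariant subspace spanned by the images of $e_2,e_3,e_4$ in that quotient (deleting the last row and column); you instead observe that $W=\mathrm{span}\{e_2,e_3,e_4\}$ is already invariant in the \emph{standard} basis, because the only nonzero off-diagonal entries of $M_1$ sit in its middle row, and you read off the three restriction matrices directly. The two constructions produce identical matrices precisely because the images of $e_2,e_3,e_4$ under every $\tau_1(s_i)$ involve no $e_1$-, $e_5$-, or $X$-component, so nothing is lost by skipping the change of basis. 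What your version buys: it avoids introducing $X$ altogether, and consequently does not need $f\neq 0$ (the paper's $x$ is undefined at $f=0$), which you correctly flag; it also realizes $\tau_1^{(1)}$ as an honest subrepresentation rather than a sub-of-a-quotient. What the paper's version buys is continuity with Theorem \ref{irred1}, where the vector $X$ was already exhibited, and a presentation of $\tau_1$ in block-triangular form that displays the full decomposition $\langle X\rangle\subset\langle X\rangle\oplus W\subset\mathbb{C}^5$ at once. One minor caveat applying equally to both arguments: calling $\tau_1^{(1)}$ a ``composition factor'' is literal only when it is irreducible (i.e.\ $d\neq 0$ by Theorem \ref{t111}); in general it is a subquotient, as your closing remark acknowledges.
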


\begin{proof}
By Theorems \ref{3locTn} and \ref{irred1}, we have seen that the vector $X=(1,x,x^2,x^3,x^4)^T$, where $x=\frac{1-\sqrt{1-df}}{f}$, is invariant under the matrices $\tau_1(s_i)$ for all $1\leq i \leq 3$. Consider the new basis $\{X,e_2,e_3,e_4,e_5\}$ of $\mathbb{C}^5$ where $e_i's$ are the standard unit vectors of $\mathbb{C}^5$. We can see that\vspace*{0.05cm}\\
$\tau_1(s_1)(X)=X,$\\
$\tau_1(s_1)(e_2)=-e_2$,\\
$\tau_1(s_1)(e_3)=(f)e_2+e_3$,\\
$\tau_1(s_1)(e_4)=e_4$,\\
$\tau_1(s_1)(e_5)=e_5$,\\
$\tau_1(s_2)(X)=X,$\\
$\tau_1(s_2)(e_2)=e_2+(d)e_3$,\\
$\tau_1(s_2)(e_3)=-e_3$,\\
$\tau_1(s_2)(e_4)=(f)e_3+e_4$,\\
$\tau_1(s_2)(e_5)=e_5$,\\
$\tau_1(s_3)(X)=X,$\\
$\tau_1(s_3)(e_2)=e_2$,\\
$\tau_1(s_3)(e_3)=e_3+(d)e_4$,\\
$\tau_1(s_3)(e_4)=-e_4$,\\
$\tau_1(s_3)(e_5)=(f)e_4+e_5$.\vspace*{0.05cm} \\
We write the representation $\tau_1$ on the new basis $\{X,e_2,e_3,e_4,e_5\}$ and we get the following. 
$$\tau_1(s_1)=\begin{pmatrix}
     1& 0 & 0 & 0 & 0\\
     0& -1 & f & 0 & 0\\
     0& 0 & 1 & 0 & 0\\
     0& 0 & 0 & 1 & 0\\
     0& 0 & 0 & 0 & 1\\
   \end{pmatrix},$$
$$\tau_1(s_2)=\begin{pmatrix}
     1& 0 & 0 & 0 & 0\\
     0& 1 & 0 & 0 & 0\\
     0& d & -1 & f & 0\\
     0& 0 & 0 & 1 & 0\\
     0& 0 & 0 & 0 & 1\\
   \end{pmatrix}, $$
and
$$\tau_1(s_3)=\begin{pmatrix}
     1& 0 & 0 & 0 & 0\\
     0& 1 & 0 & 0 & 0\\
     0& 0 & 1 & 0 & 0\\
     0& 0 & d & -1 & f\\
     0& 0 & 0 & 0 & 1\\
   \end{pmatrix}.$$
By removing the first row and first column, as well as the last row and last column, in each of the matrices above, we obtain the desired result. This reduction is justified by the fact that the subspace spanned by \(\{X, e_2, e_3, e_4\}\) is invariant under the group action; indeed, in the new basis, the generators have fifth row \((0,0,0,0,1)\), which allows us to restrict to a $4$-dimensional invariant subspace prior to quotienting out the invariant vector \(X\).
\end{proof}

Now we study the irreducibility of the representation $\tau_1^{(1)}:T_4 \to \mathrm{GL}_3(\mathbb{C})$.

\begin{theorem} \label{t111}
Consider the representation $\tau_1^{(1)}:T_4 \to \mathrm{GL}_3(\mathbb{C})$ given in Theorem \ref{t11}. We have the following two cases:
\begin{itemize}
\item[(1)] If $d=0$, then $\tau_1^{(1)}$ is reducible.
\item[(2)] If $d\neq 0$, then $\tau_1^{(1)}$ is irreducible if and only if $d\neq \frac{2}{f}$.
\end{itemize}
\end{theorem}

\begin{proof}
Recall that the representation $\tau_1^{(1)}:T_4 \to \mathrm{GL}_3(\mathbb{C})$ is given by acting on the generators of $T_4$, $s_i, 1\leq i \leq 3,$ as follows. 
$$\tau_1^{(1)}(s_1)=\begin{pmatrix}
     -1& f & 0\\
     0 & 1 & 0\\
     0 & 0 & 1
   \end{pmatrix},$$
$$\tau_1^{(1)}(s_2)=\begin{pmatrix}
     1& 0 & 0\\
     d & -1 & f\\
     0 & 0 & 1
   \end{pmatrix}, $$
and
$$\tau_1^{(1)}(s_3)=\begin{pmatrix}
     1& 0 & 0\\
     0 & 1 & 0\\
     0 & d & -1
   \end{pmatrix}.$$
We consider each case separately in the following.
\begin{itemize}
\item[(1)] If $d=0$ then we clearly see that $(1,0,0)^T$ is invariant under $\tau_1^{(1)}(s_i)$ for all $1\leq i \leq 3$ and so $\tau_1^{(1)}$ is reducible. 
\item[(2)] Suppose that $d\neq 0$. For the necessary condition, if $d=\frac{2}{f}$, then direct computations implies that the vector $(\frac{f}{2},1,\frac{1}{f})^T$ is invariant under $\tau_1^{(1)}(s_i)$ for all $1\leq i \leq 3$ and so we get that $\tau_1^{(1)}$ is reducible. Now, for the sufficient condition, suppose that $d\neq \frac{2}{f}$, and assume to get a contradiction that $\tau_1^{(1)}$ is reducible. Let $U$ be a nontrivial invariant subspace of $\mathbb{C}^3$ and let $u=(u_1,u_2,u_3)^T \in U$ be a nonzero element. Then, we have the following.
$$v_1=\tau_1^{(1)}(s_1)(u)-u=(-2u_1+fu_2)e_1 \in U,$$
$$v_2=\tau_1^{(1)}(s_2)(v_1)-v_1=d(-2u_1+fu_2)e_2 \in U,$$
$$v_3=\tau_1^{(1)}(s_3)(v_2)-v_2=d^2(-2u_1+fu_2)e_3 \in U.$$
As $d\neq 0$, we obtain that $-2u_1+fu_2=0$ since otherwise we get that $e_1,e_2,e_3 \in U$, which is a contradiction as $U$ is nontrivial. So, $u_1=\frac{f}{2}u_2$ and so $u=(\frac{f}{2}u_2,u_2,u_3)^T$. Similarly, we also have the following.
$$w_1=\tau_1^{(1)}(s_3)(u)-u=(du_2-2u_3)e_3 \in U,$$
$$w_2=\tau_1^{(1)}(s_2)(w_1)-w_1=f(du_2-2u_3)e_2 \in U,$$
$$w_3=\tau_1^{(1)}(s_1)(w_2)-w_2=f^2(du_2-2u_3)e_1 \in U.$$
Similarly, as $f\neq 0$ and $U$ is nontrivial, we get that $du_2-2u_3=0$. So, $u_3=\frac{d}{2}u_2$, which gives that $u=(\frac{f}{2}u_2,u_2,\frac{d}{2}u_2)^T$. Therefore, as $u$ is a nonzero element in $U$, we conclude that $$U=\langle z=\left(\frac{f}{2},1,\frac{d}{2} \right)^T \rangle.$$ Now, we have that $\tau_1^{(1)}(s_1)(z)-z=(-2+df)e_2 \in U$ with $-2+df\neq 0$, which implies that $e_2 \in U$, and so we have $e_2$ is a multiples of $z$, a clear contradiction as $f\neq 0$ and $d\neq 0$. Hence,  $\tau_1^{(1)}$ is irreducible in this case, as required.
\end{itemize} 
\end{proof}

\vspace*{0.1cm}

\subsection{The irreducibility of $\tau_2$ in the case $n=4$}

In this subsection, we answer the question of the irreducibiliy of the representation $\tau_2$ given in Theorem \ref{3locTn} in the case $n=4$. We start with the following theorem.

\begin{theorem} \label{t21}
Consider the representation $\tau_2: T_4\to \mathrm{GL}_{5}(\mathbb{C})$ given in Theorem \ref{3locTn}. Then, $\tau_2$ has a composition factor, namely $\tau_2^{(1)}:T_4 \to \mathrm{GL}_4(\mathbb{C})$, which is given by acting on the generators of $T_4$, $s_i, 1\leq i \leq 3,$ as follows. 
$$\tau_2^{(1)}(s_i)=\left( \begin{array}{c|@{}c|c@{}}
   \begin{matrix}
     I_{i-1} 
   \end{matrix} 
      & 0 & 0 \\
      \hline
    0 &\hspace{0.2cm} \begin{matrix}
   		-\sqrt{1-fh} & f\\
   		h & \sqrt{1-fh}\\
   		\end{matrix}  & 0  \\
\hline
0 & 0 & I_{3-i}
\end{array} \right).$$
\end{theorem}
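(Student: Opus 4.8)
The plan is to read the composition factor off directly from the invariant line that already appeared in the proof of Theorem~\ref{irred1}. Recall from that proof (the case $2\le j\le 4$) that, for $\tau_2$, the vector $e_1=(1,0,0,0,0)^T$ is fixed by $\tau_2(s_i)$ for every $1\le i\le 3$; hence $W:=\langle e_1\rangle$ is a one-dimensional $\tau_2$-invariant subspace of $\mathbb{C}^5$. Consequently $\tau_2$ descends to a representation $\overline{\tau_2}\colon T_4\to\text{GL}(\mathbb{C}^5/W)\cong\text{GL}_4(\mathbb{C})$ on the quotient, and the whole task reduces to showing that, expressed in the basis $\{\bar e_2,\bar e_3,\bar e_4,\bar e_5\}$ of $\mathbb{C}^5/W$, this quotient coincides with the representation $\tau_2^{(1)}$ in the statement. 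Since $0\subset W\subset\mathbb{C}^5$ is part of a composition series of $\mathbb{C}^5$ (refine if necessary), $\tau_2^{(1)}$ is indeed a composition factor of $\tau_2$.

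The verification is a short bookkeeping computation. Because $M_2$ has $(1,0,0)$ as both its first row and its first column, the matrix $\tau_2(s_i)$ fixes the unit vector $e_i$ and acts on the pair $e_{i+1},e_{i+2}$ through the invertible $2\times2$ block $A=\begin{pmatrix}-\sqrt{1-fh}&f\\ h&\sqrt{1-fh}\end{pmatrix}$, while fixing every remaining unit vector. I would simply record, for each $i=1,2,3$, the images $\overline{\tau_2}(s_i)(\bar e_j)$ with $2\le j\le5$: for $i=1$ the block $A$ occupies the coordinates $(\bar e_2,\bar e_3)$ with $\bar e_4,\bar e_5$ fixed; for $i=2$ it occupies $(\bar e_3,\bar e_4)$ with $\bar e_2,\bar e_5$ fixed; for $i=3$ it occupies $(\bar e_4,\bar e_5)$ with $\bar e_2,\bar e_3$ fixed. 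Reading these off gives $\tau_2^{(1)}(s_i)$ as the $4\times4$ matrix with a leading $I_{i-1}$ block, then $A$, then a trailing $I_{3-i}$ block, which is exactly the claimed form.

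Two remarks close the argument. First, no relations need to be re-checked: $\tau_2$ satisfies the defining relations of $T_4$ (namely $\tau_2(s_i)^2=1$ and $\tau_2(s_1)\tau_2(s_3)=\tau_2(s_3)\tau_2(s_1)$), and these pass automatically to the quotient, so $\overline{\tau_2}=\tau_2^{(1)}$ is a genuine representation. Second, there is essentially no hard step here; the only point requiring care is the index bookkeeping --- verifying that after deleting the coordinate $e_1$ the nontrivial $2\times2$ block of $\tau_2(s_i)$ lands in positions $(i,i+1)$ of the resulting $4\times4$ matrix, so that the leading identity block has size $i-1$ and the trailing one size $3-i$, in agreement with the stated $\tau_2^{(1)}$.
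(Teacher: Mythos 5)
Your argument is correct and takes essentially the same route as the paper: both identify the invariant line spanned by $(1,0,0,0,0)^T$ and obtain $\tau_2^{(1)}$ by deleting the first row and first column of each $\tau_2(s_i)$, which is exactly your quotient representation on $\mathbb{C}^5/\langle e_1\rangle$ in the basis $\{\bar e_2,\bar e_3,\bar e_4,\bar e_5\}$. The index bookkeeping you describe (the $2\times 2$ block $A$ landing in positions $(i,i+1)$ of the $4\times 4$ matrix) is the same verification the paper leaves implicit.
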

\begin{proof}
By Theorems \ref{3locTn} and \ref{irred1}, we have seen that the vector $(1,0,\ldots, 0)^T$ is invariant under the matrices $\tau_2(s_i)$ for all $1\leq i \leq 3$. So, eliminating the first row and the first column of each of the matrices $\tau_2(s_i)$ gives the required result.
\end{proof}

\begin{theorem} \label{t22}
The representation $\tau_2^{(1)}:T_4 \to \mathrm{GL}_4(\mathbb{C})$ given in Theorem \ref{t21} is reducible. Moreover, we have the following.
\begin{itemize}
\item[•] If $f=0$, then the composition factor of $\tau_2^{(1)}$, namely $\tau_2^{(2)}:T_4 \to \mathrm{GL}_3(\mathbb{C})$, is the mapping that takes every generator to a lower triangular matrix.
\item[•] If $f\neq 0$, then the composition factor of $\tau_2^{(1)}$ namely, $\tau_2^{(2)}:T_4 \to \mathrm{GL}_3(\mathbb{C})$, is given by acting on the generators $s_i, 1\leq i \leq 3,$ as follows.
$$\tau_2^{(2)}(s_1)=\begin{pmatrix}
     -1& 0 & 0\\
     -\frac{\left(\sqrt{1-f h}+1\right)^2}{f} & 1 & 0\\
     -\frac{\left(\sqrt{1-f h}+1\right)^3}{f^2} & 0 & 1
   \end{pmatrix},$$
$$\tau_2^{(2)}(s_2)=\begin{pmatrix}
     -\sqrt{1-fh}& f & 0\\
     h & \sqrt{1-fh} & 0\\
     0 & 0 & 1
   \end{pmatrix}, $$
and
$$\tau_2^{(2)}(s_3)=\begin{pmatrix}
     1& 0 & 0\\
     0 & -\sqrt{1-fh} & f\\
     0 & h & \sqrt{1-fh}
   \end{pmatrix}.$$
\end{itemize}
\end{theorem}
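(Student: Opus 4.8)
The plan is to prove the reducibility of $\tau_2^{(1)}$ by exhibiting a one-dimensional invariant subspace, to split the argument according to whether $f=0$, and in each case to identify the three-dimensional quotient representation with the matrices stated in the theorem. Throughout I will use that, by Theorem \ref{t21}, $\tau_2^{(1)}$ is a genuine representation of $T_4$ whose $i$-th generator acts by a fixed $2\times 2$ block $M=\begin{pmatrix}-\sqrt{1-fh}&f\\ h&\sqrt{1-fh}\end{pmatrix}$ on the $i$-th and $(i+1)$-st coordinates of $\mathbb{C}^4$ and trivially elsewhere, with the square root chosen once and for all.

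First I would dispose of the case $f=0$. Then $\sqrt{1-fh}=1$, so $M=\begin{pmatrix}-1&0\\ h&1\end{pmatrix}$ is lower triangular, and hence each matrix $\tau_2^{(1)}(s_i)$, $1\le i\le 3$, is lower triangular in the standard basis $\{e_1,e_2,e_3,e_4\}$ of $\mathbb{C}^4$. Consequently $\langle e_4\rangle$ is invariant, so $\tau_2^{(1)}$ is reducible, and the induced action on $\mathbb{C}^4/\langle e_4\rangle$ in the basis $\{\overline{e_1},\overline{e_2},\overline{e_3}\}$ is obtained by deleting the last row and last column of each $\tau_2^{(1)}(s_i)$; the three resulting matrices are again lower triangular, which is exactly the asserted form of $\tau_2^{(2)}$ when $f=0$.

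Next I would treat the case $f\neq 0$. One computes $\det M=-1$ and that the trace of $M$ is $0$, so the eigenvalues of $M$ are $\pm 1$, and a direct check shows that $(1,x)^{T}$ with $x=\dfrac{1+\sqrt{1-fh}}{f}$ spans the $+1$-eigenspace of $M$ (the verification collapses, via $fx=1+\sqrt{1-fh}$, to the identity $fh=fh$). Since each $\tau_2^{(1)}(s_i)$ acts by $M$ on two consecutive coordinates and trivially elsewhere, the vector $v=(1,x,x^{2},x^{3})^{T}$ is fixed by $\tau_2^{(1)}(s_1),\tau_2^{(1)}(s_2),\tau_2^{(1)}(s_3)$, so $\langle v\rangle$ is invariant and $\tau_2^{(1)}$ is reducible. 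I would then pass to the basis $\{v,e_2,e_3,e_4\}$ of $\mathbb{C}^4$ (a basis since the first coordinate of $v$ is $1$), rewrite each $\tau_2^{(1)}(s_i)$ in it — the only nonroutine step being to express $\tau_2^{(1)}(s_1)(e_2)=fe_1+\sqrt{1-fh}\,e_2$ through $e_1=v-xe_2-x^{2}e_3-x^{3}e_4$, where the coefficient of $e_2$ collapses using $\sqrt{1-fh}-fx=-1$ and where $fx^{k}=\dfrac{(1+\sqrt{1-fh})^{k}}{f^{k-1}}$ — and then read off the action on $\mathbb{C}^4/\langle v\rangle$ in the basis $\{\overline{e_2},\overline{e_3},\overline{e_4}\}$ as the lower-right $3\times 3$ block of the transformed matrices. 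This yields precisely the three displayed matrices for $\tau_2^{(2)}(s_1),\tau_2^{(2)}(s_2),\tau_2^{(2)}(s_3)$.

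The main, and essentially only, obstacle is bookkeeping: one must keep the chosen square root of $1-fh$ consistent with the one occurring in $M$, and one must carry out the basis-change computation for $\tau_2^{(1)}(s_1)$ with care, since there the $v$-, $e_3$- and $e_4$-components are generated out of $e_2$; the identities $x=\frac{1+\sqrt{1-fh}}{f}$ and $\sqrt{1-fh}-fx=-1$ are exactly what forces the entries into the stated closed forms. Everything else — that the $\tau_2^{(1)}(s_i)$ obey the defining relations of $T_4$ and have the given $2$-local shape — is inherited from Theorems \ref{3locTn} and \ref{t21}.
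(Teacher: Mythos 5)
Your proposal is correct and, in the main case $f\neq 0$, follows essentially the same route as the paper: exhibit the fixed vector $(1,x,x^2,x^3)^T$ with $x=\frac{1+\sqrt{1-fh}}{f}$, change basis to $\{v,e_2,e_3,e_4\}$, and read off the $3\times 3$ block, using exactly the identities $fx=1+\sqrt{1-fh}$ and $\sqrt{1-fh}-fx=-1$. The only divergence is in the case $f=0$: the paper quotients after claiming that $(1,0,0,0)^T$ is invariant (which, as literally stated, fails when $h\neq 0$, since $e_1\mapsto -e_1+he_2$; what is actually invariant is the subspace $\langle e_2,e_3,e_4\rangle$), whereas you invoke the invariant line $\langle e_4\rangle$ and delete the last row and column instead of the first; both yield a lower triangular $3\times 3$ subquotient as the theorem asserts, and your version is the cleaner of the two.
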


\begin{proof}
We consider two cases in the following.
\begin{itemize}
\item[•] If $f=0$, then clearly we can see that the vector $(1,0,0,0)^T$ is invariant under $\tau_2^{(1)}(s_i)$ for all $1\leq i \leq 3$. Thus, $\tau_2^{(1)}$ is reducible. Now, eliminating the first row and the first column implies that the composition factor of $\tau_2^{(1)}$, namely $\tau_2^{(2)}:T_4 \to \mathrm{GL}_3(\mathbb{C})$, is the mapping that takes every generator to a lower triangular matrix.
\item[•] Suppose that $f\neq 0$. We can see that the vector $X=(1,x,x^2,x^3)^T$, where $x=\frac{1+\sqrt{1-fh}}{f}$, is invariant under $\tau_2^{(1)}(s_i)$ for all $1\leq i \leq 3$. Thus, $\tau_2^{(1)}$ is reducible. Now, consider the new basis $\{X,e_2,e_3,e_4\}$ of $\mathbb{C}^4$ where $e_i's$ are the standard unit vectors of $\mathbb{C}^4$. We can see that\vspace*{0.05cm}\\
$\tau_2^{(1)}(s_1)(X)=X,$\\
$\tau_2^{(1)}(s_1)(e_2)=(f)e_1+(\sqrt{1-fh})e_2\\ \hspace*{1.75cm} =f(X-(x)e_2-(x^2)e_3-(x^3)e_4)+(\sqrt{1-fh})e_2 \\ \hspace*{1.75cm} =(f)X+(\sqrt{1-fh}-fx)e_2-(x^2f)e_3-(x^3f)e_4\\ \hspace*{1.75cm} =(f)X-e_2-\left(\frac{\left(\sqrt{1-f h}+1\right)^2}{f}\right) e_3-\left(\frac{\left(\sqrt{1-f h}+1\right)^3}{f^2}\right)e_4,$\\
$\tau_2^{(1)}(s_1)(e_3)=e_3$,\\
$\tau_2^{(1)}(s_1)(e_4)=e_4$,\\
$\tau_2^{(1)}(s_2)(X)=X$,\\
$\tau_2^{(1)}(s_2)(e_2)=(-\sqrt{1-fh})e_2+(h)e_3$,\\
$\tau_2^{(1)}(s_2)(e_3)=(f)e_2+(\sqrt{1-fh})e_3$,\\
$\tau_2^{(1)}(s_2)(e_4)=e_4$,\\
$\tau_2^{(1)}(s_3)(X)=X$,\\
$\tau_2^{(1)}(s_3)(e_2)=e_2$,\\
$\tau_2^{(1)}(s_3)(e_3)=-(\sqrt{1-fh})e_3+(h)e_4$,\\
$\tau_2^{(1)}(s_3)(e_4)=(f)e_3+(\sqrt{1-fh})e_4$.\vspace*{0.05cm}
\end{itemize}
Writing $\tau_2^{(1)}$ in the new basis $\{X,e_2,e_3,e_4\}$ of $\mathbb{C}^4$ and eliminating the first row and the first column gives our required result for $\tau_2^{(2)}$.
\end{proof}

Now we find necessary and sufficient conditions for the representation $\tau_2^{(2)}:T_4 \to \mathrm{GL}_3(\mathbb{C})$, in the case $f\neq 0$, to be irreducible. The case $f=0$ is straightforward.

\begin{theorem} \label{t222}
Consider the representation $\tau_2^{(2)}:T_4 \to \mathrm{GL}_3(\mathbb{C})$ given in Theorem \ref{t22} with $f\neq 0$. Then, $\tau_2^{(2)}$ is irreducible if and only if $h\neq 0$.
\end{theorem}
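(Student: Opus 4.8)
The strategy we propose mirrors the proof of Theorem~\ref{t111}. Write $w=\sqrt{1-fh}$, so that the three matrices of Theorem~\ref{t22} read
$$\tau_2^{(2)}(s_1)=\begin{pmatrix} -1 & 0 & 0\\ -\tfrac{(w+1)^2}{f} & 1 & 0\\ -\tfrac{(w+1)^3}{f^2} & 0 & 1\end{pmatrix},\qquad \tau_2^{(2)}(s_2)=\begin{pmatrix} -w & f & 0\\ h & w & 0\\ 0 & 0 & 1\end{pmatrix},\qquad \tau_2^{(2)}(s_3)=\begin{pmatrix} 1 & 0 & 0\\ 0 & -w & f\\ 0 & h & w\end{pmatrix}.$$
For the necessity of the condition, if $h=0$ then $w=1$, and one checks directly that the line spanned by $(f^2,2f,4)^T$ is stable under each $\tau_2^{(2)}(s_i)$ (it is a common eigenvector, of eigenvalue $-1$ for $s_1$ and $+1$ for $s_2$ and $s_3$); hence $\tau_2^{(2)}$ is reducible.

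For the converse, assume $h\neq 0$ and let $U\subsetneq\mathbb{C}^3$ be a nonzero invariant subspace, to get a contradiction. First one shows $U\not\subseteq\langle e_2,e_3\rangle$: applying $\tau_2^{(2)}(s_2)$ to $(0,u_2,u_3)^T$ gives $(fu_2,wu_2,u_3)^T$, which forces $u_2=0$ (since $f\neq 0$), so $U\subseteq\langle e_3\rangle$; but $\tau_2^{(2)}(s_3)(e_3)=(0,f,w)^T\notin\langle e_3\rangle$. Thus $U$ contains a vector $u$ with nonzero first coordinate, and then, exactly as in Theorem~\ref{t111}, one propagates it along the generators: $\bigl(\tau_2^{(2)}(s_1)-I\bigr)(u)$ is a nonzero scalar multiple of $p_1:=\bigl(-2,-\tfrac{(w+1)^2}{f},-\tfrac{(w+1)^3}{f^2}\bigr)^T$, so $p_1\in U$; then $\bigl(\tau_2^{(2)}(s_2)-I\bigr)(p_1)=h\,q_1$ with $q_1:=(f,w-1,0)^T$, so $q_1\in U$; and $\bigl(\tau_2^{(2)}(s_3)-I\bigr)(q_1)=h\,q_2$ with $q_2:=(0,f,w-1)^T$, so $q_2\in U$. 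A short computation yields $\det(p_1,q_1,q_2)=-4(w^2+1)=-4(2-fh)$ for the determinant of the matrix with columns $p_1,q_1,q_2$. Consequently, whenever $fh\neq 2$ the three vectors are linearly independent, so $U=\mathbb{C}^3$, a contradiction, and $\tau_2^{(2)}$ is irreducible.

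The step I expect to be the main obstacle is the degenerate case $fh=2$, i.e.\ $w^2=-1$, which escapes the argument above: there $p_1$ already lies in the plane $V:=\langle q_1,q_2\rangle$, so the chain only establishes $V\subseteq U$, and since $\dim V=2$ and $U$ is proper one must decide whether $V$ itself is invariant. I would settle this by writing out the restrictions $\tau_2^{(2)}(s_i)|_V$ in the basis $\{q_1,q_2\}$, or equivalently by testing the defining linear form $2w\,x_1+f(w-1)\,x_2-f^2 x_3$ of $V$ against all three generators: a direct check indicates that each generator carries this form to a multiple of itself, i.e.\ preserves $V$, so in the case $fh=2$ the representation $\tau_2^{(2)}$ is in fact reducible. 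Thus the clean dichotomy holds only away from $fh=2$, and the statement as written appears to require the additional hypothesis $fh\neq 2$; pinning down this exceptional case is exactly where the remaining work of the proof lies.
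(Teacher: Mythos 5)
Your argument follows the same basic strategy as the paper's own proof (the necessity direction uses the same invariant line, rescaled, and the sufficiency direction propagates a vector with nonzero first coordinate through $\tau_2^{(2)}(s_1)-I$, $\tau_2^{(2)}(s_2)-I$, $\tau_2^{(2)}(s_3)-I$), but you handle the one- and two-dimensional invariant subspaces in a single sweep, which is cleaner. More importantly, you have put your finger on a genuine flaw in the theorem and in the paper's proof. In the subcase $\dim(U)=2$, $u_1\neq 0$, the paper produces exactly your vectors (its $W_1=p_1$, $W_2=\frac{1}{f}q_1$, $W_3=\frac{1}{f}q_2$) and asserts that ``elementary linear algebra'' gives their independence; your computation $\det(p_1,q_1,q_2)=-4\bigl(w^2+1\bigr)=-4(2-fh)$ is correct, so that assertion fails precisely when $fh=2$.

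Your resolution of the exceptional case also checks out. When $fh=2$ one has $w^2=-1$, hence $(w+1)^2=2w$ and $(w+1)^3=2(w-1)$, and a direct computation (which I verified, e.g.\ with $f=1$, $h=2$, $w=i$) shows that the row vector $\phi=\bigl(2w,\ f(w-1),\ -f^2\bigr)$ satisfies $\phi\,\tau_2^{(2)}(s_i)=\phi$ for $i=1,2,3$. Since $q_1,q_2$ are independent ($f\neq 0$), the plane $\ker\phi=\langle q_1,q_2\rangle$ is a proper nontrivial invariant subspace, so $\tau_2^{(2)}$ is reducible even though $h\neq 0$. The theorem as stated is therefore false at $fh=2$ and requires the additional hypothesis $fh\neq 2$; with that hypothesis added, your proof is complete and correct, and it supersedes the paper's.
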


\begin{proof}
For the necessary condition, if $h=0$ then we can see that the vector $(1,\frac{2}{f},\frac{4}{f^2})^T$ is invariant under $\tau_2^{(2)}(s_i)$ for all $1 \leq i \leq 3$, and so $\tau_2^{(2)}$ is reducible. Now, for the sufficient condition, we have $h\neq 0$. Assume to  get a contradiction that $\tau_2^{(2)}$ is reducible and let $U$ be a nontrivial subspace of $\mathbb{C}^3$ which is invariant under $\tau_2^{(2)}$. Then, the dimension of $U$ is $1$ or $2$. We consider each case separately.
\begin{itemize}
\item[(1)] The case $\dim(U)=1$. In this case, set $U=\langle u \rangle$ where $u=\alpha_1e_1+\alpha_2e_2+\alpha_3e_3$, where $e_i's$ are the standard unit vectors of $\mathbb{C}^3$. We have the following
$$\tau_2^{(2)}(s_2)(u)=\left(-(\sqrt{1-fh})\alpha_1+f\alpha_2, h\alpha_1+(\sqrt{1-fh})\alpha_2, \alpha_3 \right)^T\in U,$$
which implies that there exists a scalar $\beta$ such that 
$$\left(-(\sqrt{1-fh})\alpha_1+f\alpha_2, h\alpha_1+(\sqrt{1-fh})\alpha_2, \alpha_3 \right)^T=\beta u=\beta(\alpha_1, \alpha_2, \alpha_3).$$
This means that $\beta =1$ and so we get the following two equations.
\begin{equation}
-(\sqrt{1-fh})\alpha_1+f\alpha_2=\alpha_1,
\end{equation}
\begin{equation}
h\alpha_1+(\sqrt{1-fh})\alpha_2=\alpha_2.
\end{equation}
Doing the same work for $\tau_2^{(2)}(s_3)$ instead of $\tau_2^{(2)}(s_2)$ gives the following two additional equations.
\begin{equation}
-(\sqrt{1-fh})\alpha_2+f\alpha_3=\alpha_2,
\end{equation}
\begin{equation}
h\alpha_2+(\sqrt{1-fh})\alpha_3=\alpha_3.
\end{equation}
This gives that
$$\alpha_2=\dfrac{1+\sqrt{1-fh}}{f} \alpha_1$$
and
$$\alpha_3=\left( \dfrac{1+\sqrt{1-fh}}{f}\right)^2\alpha_1,$$
and so we have that 
$$u=\left( 1,\dfrac{1+\sqrt{1-fh}}{f},\left( \dfrac{1+\sqrt{1-fh}}{f}\right)^2 \right).$$
Now, we have 
$$\tau_2^{(2)}(s_1)(u)=-\left(1, \dfrac{1-fh+\sqrt{1-fh}}{f}, (\sqrt{1-fh})\dfrac{1+\sqrt{1-fh}}{f^2} \right)^T\in U,$$
which implies that there exists a scalar $\xi$ such that 
$$-\left(1, \dfrac{1-fh+\sqrt{1-fh}}{f}, (\sqrt{1-fh})\dfrac{1+\sqrt{1-fh}}{f^2} \right)^T=\xi u.$$
This implies that $\xi=-1$ and so we get that
$$\dfrac{1+\sqrt{1-fh}}{f}=\dfrac{1-fh+\sqrt{1-fh}}{f},$$ 
which gives that $fh=0$, a contradiction since both $f$ and $h$ are nonzero. \vspace*{0.2cm}
\item[(2)] The case $\dim(U)=2$. In this case, set $U=\langle u,v \rangle$ where $u=u_1e_1+u_2e_2+u_3e_3$ and $v=v_1e_1+v_2e_2+v_3e_3$. We consider here two subcases.
\begin{itemize}
\item[•] If $u_1=v_1=0$, then we have $\tau_2^{(2)}(s_2)(u)=(fu_2)e_1+(\sqrt{1-fh})(u_3)e_2+u_3\in U$. But $f\neq 0$, which implies that $u_2=0$. In a similar way we can see that that $v_2=0$. So we get that both vectors $u$ and $v$ are multiple of $e_3$, and so they are linearly dependent, a contradiction.
\item[•] Suppose, without loss of generality, that $u_1\neq 0$. Then, we have the following.
\\
\\
\\
\\ 
$$W_1=\frac{1}{u_1}\left(\tau_2^{(2)}(s_1)(u)-u\right)=- \left(2, \dfrac{(\sqrt{1-fh}+1)^2}{f},\dfrac{(\sqrt{1-fh}+1)^3}{f^2}  \right)^T \in U$$
and so
$$W_2=\frac{1}{fh}\left(\tau_2^{(2)}(s_2)(W_1)-W_1\right)=\left( 1, \dfrac{\sqrt{1-fh}-1}{f},0  \right)^T \in U$$
and then 
$$W_3=\frac{1}{h}\left(\tau_2^{(2)}(s_3)(W_2)-W_2\right)=\left( 0, 1, \dfrac{\sqrt{1-fh}-1}{f}  \right)^T \in U.$$
Now, elementary linear algebra gives that $W_1, W_2,$ and $W_3$ are three linearly independent vectors in $U$ which is of dimension $2$, a contradiction.
\end{itemize}
\end{itemize} 
Therefore, $\tau_2^{(2)}$ is irreducbile in this case and the proof is completed.
\end{proof}

\vspace*{0.15cm}

\section{On the $3$-local representations of $VT_n$ and $WT_n$} 

In this section, we classify all homogeneous $3$-local representations of $VT_n$ and $WT_n$ for all $n\geq 4$. Moreover, we study the irreducibility and faithfulness of these representations in most cases.

\vspace*{0.1cm}

\subsection{Classification of the $3$ local representations of $VT_n$ and $WT_n$}

We start by classifying all homogeneous $3$-local representations of $VT_n$ for all $n\geq 4$.

\vspace*{0.1cm}

\begin{theorem}\label{3locVTn}
Consider $n\geq 4$ and let $\delta: VT_n\to \mathrm{GL}_{n+1}(\mathbb{C})$ be a homogeneous $3$-local representation of $VT_n$. Then, $\delta$ is equivalent to one of the following fourteen representations $\delta_j$, $1\leq j\leq 14$, where $$\delta_j(s_i)=\left( \begin{array}{c|@{}c|c@{}}
   \begin{matrix}
     I_{i-1} 
   \end{matrix} 
      & 0 & 0 \\
      \hline
    0 &\hspace{0.2cm} \begin{matrix}
   		M_j
   		\end{matrix}  & 0  \\
\hline
0 & 0 & I_{n-i-1}
\end{array} \right)
\text{ and  }\ \delta_j(\rho_i)=\left( \begin{array}{c|@{}c|c@{}}
   \begin{matrix}
     I_{i-1} 
   \end{matrix} 
      & 0 & 0 \\
      \hline
    0 &\hspace{0.2cm} \begin{matrix}
   		N_j
   		\end{matrix}  & 0  \\
\hline
0 & 0 & I_{n-i-1}
\end{array} \right)$$
for all $1\leq i \leq n-1$,  and the matrices $M_j$'s and $N_j$'s are given below.
\begin{itemize}
\item[(1)] $M_1=\begin{pmatrix}
     1& 0 & 0\\
     0 & e & \frac{1-e^2}{h}\\
     0 & h & -e
   \end{pmatrix}$ and $N_1=\begin{pmatrix}
     1& 0 & 0\\
     0 & 0 & p\\
     0 & \frac{1}{p} & 0
   \end{pmatrix}$, where $e \in \mathbb{C}$, $h,p\in \mathbb{C}^*$.
\item[(2)] $M_2=\begin{pmatrix}
     -e& \frac{1-e^2}{d} & 0\\
     d & e & 0\\
     0 & 0 & 1
   \end{pmatrix}$ and $N_2=\begin{pmatrix}
     0& k & 0\\
     \frac{1}{k} & 0 & 0\\
     0 & 0 & 1
   \end{pmatrix}$, where $e \in \mathbb{C}$, $d,k\in \mathbb{C}^*$.
\item[(3)] $M_3=\begin{pmatrix}
     1& 0 & 0\\
     d & -1 & 2p-dp^2\\
     0 & 0 & 1
   \end{pmatrix}$ and $N_3=\begin{pmatrix}
     1& 0 & 0\\
     \frac{1}{p} & -1 & p\\
     0 & 0 & 1
   \end{pmatrix}$, where $d \in \mathbb{C}$, $p\in \mathbb{C}^*$.
\item[(4)] $M_4=\begin{pmatrix}
     1& 2k-hk^2 & 0\\
     0 & -1 & 0\\
     0 & h & 1
   \end{pmatrix}$ and $N_4=\begin{pmatrix}
     1& k & 0\\
     0 & -1 & 0\\
     0 & \frac{1}{k} & 1
   \end{pmatrix}$, where $h \in \mathbb{C}$, $k\in \mathbb{C}^*$.
\item[(5)] $M_5=\begin{pmatrix}
     1& b & 0\\
     0 & -1 & 0\\
     0 & 0 & 1
   \end{pmatrix}$ and $N_5=\begin{pmatrix}
     0 & k & 0\\
     \frac{1}{k} & 0 & 0\\
     0 & 0 & 1
   \end{pmatrix}$, where $b \in \mathbb{C}$, $k\in \mathbb{C}^*$.
\item[(6)] $M_6=\begin{pmatrix}
     -1& b & 0\\
     0 & 1 & 0\\
     0 & 0 & 1
   \end{pmatrix}$ and $N_6=\begin{pmatrix}
     0 & k & 0\\
     \frac{1}{k} & 0 & 0\\
     0 & 0 & 1
   \end{pmatrix}$, where $b \in \mathbb{C}$, $k\in \mathbb{C}^*$.
\item[(7)] $M_7=\begin{pmatrix}
     1& 0 & 0\\
     0 & 1 & 0\\
     0 & 0 & 1
   \end{pmatrix}$ and $N_7=\begin{pmatrix}
     1 & 0 & 0\\
     \frac{1}{p} & 0 & p\\
     0 & 0 & 1
   \end{pmatrix}$, where $p\in \mathbb{C}^*$.   
\item[(8)] $M_8=\begin{pmatrix}
     1& 0 & 0\\
     0 & 1 & 0\\
     0 & 0 & 1
   \end{pmatrix}$ and $N_8=\begin{pmatrix}
     1 & k & 0\\
     0 & -1 & 0\\
     0 & \frac{1}{k} & 1
   \end{pmatrix}$, where $k\in \mathbb{C}^*$.
\item[(9)] $M_9=\begin{pmatrix}
     1& 0 & 0\\
     0 & -1 & 0\\
     0 & 0 & -1
   \end{pmatrix}$ and $N_9=\begin{pmatrix}
     1 & 0 & 0\\
     0 & 0 & p\\
     0 & \frac{1}{p} & 1
   \end{pmatrix}$, where $p \in \mathbb{C}^*$. 
\item[(10)] $M_{10}=\begin{pmatrix}
     1& 0 & 0\\
     0 & -1 & 0\\
     0 & 0 & 1
   \end{pmatrix}$ and $N_{10}=\begin{pmatrix}
     1 & 0 & 0\\
     0 & 0 & p\\
     0 & \frac{1}{p} & 1
   \end{pmatrix}$, where $p \in \mathbb{C}^*$.   
\item[(11)] $M_{11}=\begin{pmatrix}
     1& 0 & 0\\
     0 & 1 & 0\\
     0 & 0 & 1
   \end{pmatrix}$ and $N_{11}=\begin{pmatrix}
     1 & 0 & 0\\
     0 & 0 & p\\
     0 & \frac{1}{p} & 1
   \end{pmatrix}$, where $p \in \mathbb{C}^*$.     
\item[(12)] $M_{12}=\begin{pmatrix}
     -1& 0 & 0\\
     0 & -1 & 0\\
     0 & 0 & 1
   \end{pmatrix}$ and $N_{12}=\begin{pmatrix}
     1 & k & 0\\
     \frac{1}{k} & 0 & 0\\
     0 & 0 & 1
   \end{pmatrix}$, where $k \in \mathbb{C}^*$.
\item[(13)] $M_{13}=\begin{pmatrix}
     1& 0 & 0\\
     0 & 1 & 0\\
     0 & 0 & 1
   \end{pmatrix}$ and $N_{13}=\begin{pmatrix}
     1 & k & 0\\
     \frac{1}{k} & 0 & 0\\
     0 & 0 & 1
   \end{pmatrix}$, where $k \in \mathbb{C}^*$.   
\item[(14)] $M_{14}=\begin{pmatrix}
     1& 0 & 0\\
     0 & 1 & 0\\
     0 & 0 & 1
   \end{pmatrix}$ and $N_{14}=\begin{pmatrix}
     1 & 0 & 0\\
     0 & 1 & 0\\
     0 & 0 & 1
   \end{pmatrix}$.            
\end{itemize}
\end{theorem}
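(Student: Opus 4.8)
The plan is to follow the template of the proof of Theorem~\ref{3locTn}, but now with two unknown $3\times 3$ blocks to pin down. Write
\[
\delta(s_i)=I_{i-1}\oplus M\oplus I_{n-i-1},\qquad \delta(\rho_i)=I_{i-1}\oplus N\oplus I_{n-i-1},
\]
with $M=(m_{kl})$ and $N=(n_{kl})$ unknown matrices in $\text{GL}_3(\mathbb{C})$, and translate each defining relation of $VT_n$ into polynomial equations in the eighteen entries. Since $\delta$ is homogeneous and $n\geq 4$, it suffices to impose the relations on the lowest-index generators whose $3\times 3$ supports still interact: $s_1^2=1$; $s_1s_3=s_3s_1$; $\rho_1^2=1$; $\rho_1\rho_3=\rho_3\rho_1$; the local braid relation (\ref{eqs3}), that is $\rho_1\rho_2\rho_1=\rho_2\rho_1\rho_2$; the overlapping cases of the commutation relation (\ref{eqs6}), namely $s_1\rho_3=\rho_3s_1$ and $\rho_1s_3=s_3\rho_1$; and the local mixed relation (\ref{eqs7}), namely $\rho_1\rho_2s_1=s_2\rho_1\rho_2$. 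Every remaining instance of a relation of $VT_n$ either repeats one of these systems (by homogeneity) or is automatic, because the two blocks involved do not meet.

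I would start with the two ``pure'' sub-systems. The subgroup $\langle s_1,\dots,s_{n-1}\rangle$ is $T_n$, and the restriction of $\delta$ to it is a homogeneous $3$-local representation of $T_n$; hence Theorem~\ref{3locTn} tells us that, after a change of basis preserving $3$-locality, $M$ is one of the eleven matrices listed there. Concretely, as in that proof, $s_1s_3=s_3s_1$ forces $m_{13}=m_{31}=0$ and $s_1^2=1$ forces the system (\ref{eq9})--(\ref{eq21}). Symmetrically, $\langle \rho_1,\dots,\rho_{n-1}\rangle$ is the Coxeter group of type $A_{n-1}$, so the restriction of $\delta$ to it is a homogeneous $3$-local representation of that group; the analogous but slightly longer computation --- $\rho_1\rho_3=\rho_3\rho_1$ kills the two corner entries of $N$, $\rho_1^2=1$ gives an involution system, and $\rho_1\rho_2\rho_1=\rho_2\rho_1\rho_2$ (an identity of $4\times 4$ blocks on coordinates $1,2,3,4$) supplies the remaining equations --- produces a short explicit list of candidate matrices $N$. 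At the end of this step we therefore have a finite menu of normalized pairs $(M,N)$.

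The heart of the argument is the coupling step. Imposing the overlapping cases of (\ref{eqs6}) is a one-coordinate matching that trims the third row and column of $M$ against the first row and column of $N$ and vice versa, and already eliminates several pairs. The decisive relation is (\ref{eqs7}): both $\rho_1\rho_2s_1$ and $s_2\rho_1\rho_2$ are products of $4\times 4$ blocks on coordinates $1,2,3,4$, so equating them yields a large polynomial system simultaneously constraining the parameters of $M$ and of $N$. Running this against every pair on the menu kills most of them and, for the survivors, forces precisely the parameter relations displayed in the fourteen families --- for instance the entries $2p-dp^2$ in $M_3$ and $2k-hk^2$ in $M_4$ are exactly what (\ref{eqs7}) imposes once $N$ is taken to be $N_3$, resp.\ $N_4$. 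As in Theorem~\ref{3locTn}, the casework should be organized by which off-diagonal and corner entries of $M$ and $N$ vanish (the analogue of the $b=0$ versus $f=0$ split), which keeps it finite and exhaustive; one then closes with the routine converse check that each of the fourteen listed pairs does satisfy all the relations of $VT_n$.

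I expect the main obstacle to be the bookkeeping in this last step: relation (\ref{eqs7}) generates sizeable systems, and sweeping it across all the pairs from the first step risks a combinatorial blow-up, so the casework must be pruned aggressively --- many pairs should already die from (\ref{eqs6}) or from low-degree consequences of (\ref{eqs7}) before the full system is needed. A secondary but genuine subtlety is the management of equivalences: the changes of basis preserving homogeneous $3$-locality act simultaneously on $M$ and $N$, so when normalizing the two blocks one must commit to a single such change of basis and verify that the passage from the raw solution set to the fourteen displayed families loses nothing.
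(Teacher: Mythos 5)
Your plan is exactly the paper's approach: the paper's own proof of this theorem is the single line ``similar to the proof of Theorem~\ref{3locTn}'', i.e.\ brute-force imposition of the defining relations on the generators whose $3\times 3$ blocks overlap, and you have correctly identified the minimal set of such relations (the two involution/commutation systems for $M$ and $N$, the braid relation for $N$, and the mixed relations (\ref{eqs6}) and (\ref{eqs7}) as the coupling step), together with a sound organization of the casework. Your observation that the $s$-part must already be one of the eleven matrices of Theorem~\ref{3locTn} is consistent with the statement (each $M_j$ there is a specialization of one of the eleven), and since that classification is obtained entry-wise rather than by a genuine change of basis, the equivalence-management subtlety you flag does not in fact cause trouble.
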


\begin{proof}
The proof is similar to the proof of Theorem \ref{3locTn}.
\end{proof}

We now classify all homogeneous $3$-local representations of $WT_n$ for all $n\geq 4$.

\begin{theorem}\label{3locWTn}
Consider $n\geq 4$ and let $\gamma: WT_n\to \mathrm{GL}_{n+1}(\mathbb{C})$ be a homogeneous $3$-local representation of $WT_n$. Then, $\gamma$ is equivalent to one of the following five representations $\gamma_j$, $1\leq j\leq 5$, where $$\gamma_j(s_i)=\left( \begin{array}{c|@{}c|c@{}}
   \begin{matrix}
     I_{i-1} 
   \end{matrix} 
      & 0 & 0 \\
      \hline
    0 &\hspace{0.2cm} \begin{matrix}
   		M_j
   		\end{matrix}  & 0  \\
\hline
0 & 0 & I_{n-i-1}
\end{array} \right)
\text{ and  }\ \gamma_j(\rho_i)=\left( \begin{array}{c|@{}c|c@{}}
   \begin{matrix}
     I_{i-1} 
   \end{matrix} 
      & 0 & 0 \\
      \hline
    0 &\hspace{0.2cm} \begin{matrix}
   		N_j
   		\end{matrix}  & 0  \\
\hline
0 & 0 & I_{n-i-1}
\end{array} \right)$$
for all $1\leq i \leq n-1$,  and the matrices $M_j$'s and $N_j$'s are given below.
\begin{itemize}
\item[(1)] $M_1=\begin{pmatrix}
     1& 0 & 0\\
     0 & 0 & \frac{1}{h}\\
     0 & h & 0
   \end{pmatrix}$ and $N_1=\begin{pmatrix}
     1& 0 & 0\\
     0 & 0 & p\\
     0 & \frac{1}{p} & 0
   \end{pmatrix}$, where $h,p\in \mathbb{C}^*$.
\item[(2)] $M_2=\begin{pmatrix}
     0& \frac{1}{d} & 0\\
     d & 0 & 0\\
     0 & 0 & 1
   \end{pmatrix}$ and $N_2=\begin{pmatrix}
     0& k & 0\\
     \frac{1}{k} & 0 & 0\\
     0 & 0 & 1
   \end{pmatrix}$, where $d,k\in \mathbb{C}^*$.
\item[(3)] $M_3=\begin{pmatrix}
     1& 0 & 0\\
     \frac{1}{p} & -1 & p\\
     0 & 0 & 1
   \end{pmatrix}$ and $N_3=\begin{pmatrix}
     1& 0 & 0\\
     \frac{1}{p} & -1 & p\\
     0 & 0 & 1
   \end{pmatrix}$, where $p\in \mathbb{C}^*$.
\item[(4)] $M_4=\begin{pmatrix}
     1& k & 0\\
     0 & -1 & 0\\
     0 & \frac{1}{k} & 1
   \end{pmatrix}$ and $N_4=\begin{pmatrix}
     1& k & 0\\
     0 & -1 & 0\\
     0 & \frac{1}{k} & 1
   \end{pmatrix}$, where $k\in \mathbb{C}^*$.
\item[(5)] $M_5=\begin{pmatrix}
     1& 0 & 0\\
     0 & 1 & 0\\
     0 & 0 & 1
   \end{pmatrix}$ and $N_5=\begin{pmatrix}
     1 & 0 & 0\\
     0 & 1 & 0\\
     0 & 0 & 1
   \end{pmatrix}$.
\end{itemize}
\end{theorem}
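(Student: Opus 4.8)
The plan is to mimic the proof of Theorem~\ref{3locTn} (equivalently Theorem~\ref{3locVTn}), but now imposing \emph{all} defining relations of $WT_n$. Since $WT_n$ is the quotient of $VT_n$ by the welded relations (\ref{eqs8}), the cleanest route is to start from the classification of $3$-local representations of $VT_n$ given in Theorem~\ref{3locVTn}: a homogeneous $3$-local representation $\gamma\colon WT_n\to\mathrm{GL}_{n+1}(\mathbb{C})$ is in particular one of $VT_n$, hence equivalent to some $\delta_j$, $1\le j\le 14$. So it suffices to check, for each of the fourteen pairs $(M_j,N_j)$, whether the additional relation (\ref{eqs8}) is satisfied, i.e. whether $N M^2 = M^2 N'$ holds at the level of the local blocks, where $M,N$ are the $3\times 3$ blocks sitting in consecutive positions; concretely one expands $\gamma(\rho_i)\gamma(s_{i+1})\gamma(s_i)=\gamma(s_{i+1})\gamma(s_i)\gamma(\rho_{i+1})$ for $i=1$ (all other $i$ giving the same constraint by homogeneity), which produces a system of polynomial equations in the parameters $b,d,e,h,k,p,\dots$ appearing in $\delta_j$.

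First I would set up the two overlapping $5\times 5$ matrices $\gamma(s_1),\gamma(s_2),\gamma(\rho_1),\gamma(\rho_2)$ (it is enough to work in $WT_4$, since the relation is $3$-local and involves only three consecutive strands), write both sides of (\ref{eqs8}), and subtract. Then I would run through the list $\delta_1,\dots,\delta_{14}$. For most indices the welded relation will either be automatically satisfied, forcing no new constraint (these survive, after possibly relabelling, as $\gamma_1,\dots,\gamma_5$), or it will be inconsistent, or it will collapse several free parameters — for instance in $\delta_3$ one expects the condition $2p-dp^2 = p$ together with $\tfrac1p = \tfrac1p$, forcing $dp = 1$, i.e. $d=\tfrac1p$, which is exactly the shape of $\gamma_3$; similarly $\delta_4$ should collapse to $\gamma_4$ via $2k-hk^2=k$, i.e. $h=\tfrac1k$. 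The rank-one "off-diagonal" families $\delta_5,\delta_6$ and the purely diagonal/permutation cases $\delta_7,\dots,\delta_{13}$ should mostly be killed or degenerate, leaving the five surviving normal forms listed in the statement, with $\gamma_5$ the trivial representation coming from $\delta_{14}$. Finally I would record that conversely each $\gamma_j$ as written does satisfy all the relations (\ref{eqs1})--(\ref{eqs8}), which is a direct substitution.

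The main obstacle I anticipate is purely bookkeeping rather than conceptual: keeping track of which equivalences (conjugation by block-diagonal invertible matrices) were already used to bring $\delta_j$ into its normal form, so that when the welded relation forces a specialization one does not double-count or miss a case that becomes equivalent to an earlier one. In particular the cases where $M_j$ or $N_j$ is a permutation-type matrix (entries $0$ and $k,\tfrac1k$ off the diagonal) require care, because there the relation $NM^2=M^2N'$ can be satisfied "accidentally" for special parameter values, and one has to verify that such a solution is not already subsumed under another $\gamma_j$. Since the paper states the proof is "similar to the proof of Theorem~\ref{3locTn}," I would present it at the same level of detail: set up the equation system, solve it case by case, and exhibit the five resulting normal forms, relegating the routine expansions to the reader.
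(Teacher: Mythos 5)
Your proposal is correct and it takes a slightly different, arguably more economical route than the paper. The paper's proof of Theorem \ref{3locWTn} is by direct computation ``similar to the proof of Theorem \ref{3locTn}'': one writes generic blocks $M,N$, imposes the full list of relations (\ref{eqs1})--(\ref{eqs8}), and solves the resulting polynomial system from scratch. You instead observe that $WT_n$ is a quotient of $VT_n$, so any homogeneous $3$-local representation of $WT_n$ pulls back to one of $VT_n$ and is therefore already on the list of Theorem \ref{3locVTn}; it then remains only to impose the single extra relation (\ref{eqs8}) on each of the fourteen normal forms $\delta_j$. This factorization saves essentially all of the work of re-solving the involution and commutation constraints, and your sample predictions check out: for $\delta_3$ the welded relation forces $d=\tfrac1p$ (so $2p-dp^2=p$ and $M_3=N_3$, giving $\gamma_3$), for $\delta_4$ it forces $h=\tfrac1k$ (giving $\gamma_4$), for $\delta_1,\delta_2$ it forces $e=0$ (giving $\gamma_1,\gamma_2$), and $\delta_{14}$ survives as $\gamma_5$. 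Two small cautions. First, your shorthand ``$NM^2=M^2N'$ at the level of the local blocks'' is not literally meaningful, since $\gamma(s_i)$ and $\gamma(s_{i+1})$ carry their $3\times 3$ blocks in \emph{overlapping} positions; the correct computation is the $4\times 4$ (or $5\times 5$) expansion of $\gamma(\rho_i)\gamma(s_{i+1})\gamma(s_i)=\gamma(s_{i+1})\gamma(s_i)\gamma(\rho_{i+1})$ that you describe immediately afterwards, so this is only a matter of phrasing. Second, your argument inherits the statement of Theorem \ref{3locVTn} wholesale, so any defect in that list propagates (for instance, as printed, $N_7$ in $\delta_7$ is singular, so that case presumably carries a typo); the paper's from-scratch approach would not be exposed to this, but since Theorem \ref{3locVTn} is an established result of the paper you are entitled to cite it. With the case-by-case verification for $\delta_5,\dots,\delta_{13}$ carried out (each being killed or collapsing into one of the five survivors), your proof is complete and at the same level of rigor as the paper's.
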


\begin{proof}
The proof is similar to the proof of Theorem \ref{3locTn}.
\end{proof}

\subsection{On the irreducibility of the $3$-local representations of $VT_n$ and $WT_n$}

In this subsection, we prove that every homogeneous $3$-local representation of $VT_n$ or $WT_n$ is reducible for all $n\geq 4$. We start by the following theorem regarding the group $VT_n$.
 
\begin{theorem}\label{irred12}
Consider $n\geq 4$ and let $\delta: VT_n\to \mathrm{GL}_{n+1}(\mathbb{C})$ be a homogeneous $3$-local representation of $VT_n$. Then, $\delta$ is reducible.
\end{theorem}

\begin{proof}
According to Theorem \ref{3locVTn}, we know that $\delta$ is equivalent to one of the representations $\delta_j, 1\leq j \leq 14$, and so we consider the following cases.
\begin{itemize}
\item[(1)] In the case $\delta$ is equivalent to $\delta_j, j=1,4,8,9,10,11,12,14,$ we can see that the vector $(1,0,\ldots,0)^T$ is invariant under $\delta_j(s_i)$ and $\delta_j(\rho_i)$ for all $1\leq i \leq n-1$. Thus, $\delta_j$ is reducible and so $\delta$ is reducible.
\item[(2)] In the case $\delta$ is equivalent to $\delta_j, j=2,5,6,13,$ we can see that the vector $(0,\ldots,0,1)^T$ is invariant under $\delta_j(s_i)$ and $\delta_j(\rho_i)$ for all $1\leq i \leq n-1$. Thus, $\delta_j$ is reducible and so $\delta$ is reducible.
\item[(3)] In the case $\delta$ is equivalent to $\delta_j, j=3,7,$ we can see that $(1,0,\ldots,0).\delta_j(s_i)=(1,0,\ldots,0)$ and $(1,0,\ldots,0).\delta_j(\rho_i)=(1,0,\ldots,0)$ for all $1\leq i \leq n-1$. Thus, $\delta_j$ is reducible and so $\delta$ is reducible.
\end{itemize}
\end{proof}

Now, the next theorem is regarding the irreducibility of homogeneous $3$-local representations of $WT_n$ for all $n\geq 4$.
 
\begin{theorem}\label{irred122}
Consider $n\geq 4$ and let $\gamma: WT_n\to \mathrm{GL}_{n+1}(\mathbb{C})$ be a homogeneous $3$-local representation of $VT_n$. Then, $\gamma$ is reducible.
\end{theorem}

\begin{proof}
The proof is similar to that of Theorem \ref{irred12}.
\end{proof}

\vspace*{0.1cm}

\subsection{On the faithfulness of the $3$-local representations of $VT_n$ and $WT_n$}

In this subsection, we study the faithfulness of all homogeneous $3$-local representations of $VT_n$ and $WT_n$ for all $n\geq 4$. We start by the following theorem regarding the group $VT_n$.

\begin{theorem}\label{fa3locVTn}
Consider $n\geq 4$ and let $\delta: VT_n\to \mathrm{GL}_{n+1}(\mathbb{C})$ be a homogeneous $3$-local representation of $VT_n$. By Theorem \ref{3locVTn}, $\delta$ is equivalent to one of the representations $\delta_j$, $1\leq j\leq 14$. The following hold true.
\begin{itemize}
\item[(1)] If $\delta$ is equivalent to $\delta_1$, then $\delta$ is unfaithful if $e=1$.
\item[(2)] If $\delta$ is equivalent to $\delta_2$, then $\delta$ is unfaithful if $e=1$.
\item[(3)] If $\delta$ is equivalent to $\delta_3$, then $\delta$ is unfaithful if $d=\frac{1}{p}$.
\item[(4)] If $\delta$ is equivalent to $\delta_4$, then $\delta$ is unfaithful if $h=\frac{1}{k}$.
\item[(5)] If $\delta$ is equivalent to $\delta_j, 5\leq j \leq 14$, then $\delta$ is unfaithful.
\end{itemize}
\end{theorem}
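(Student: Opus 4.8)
The plan is to handle the two ``generic'' families (5)–(14) by exhibiting explicit nontrivial words in the kernel, and then treat cases (1)–(4) by specialising the parameter so that the representation factors through a quotient in which a known relation collapses. Throughout, the guiding principle is that a homogeneous $3$-local representation is determined by the single pair of matrices $(M,N)$, so $\delta(w)$ for a word $w$ in the $s_i,\rho_i$ is a product of block matrices built from $M$ and $N$; a word lies in the kernel exactly when all such blocks multiply to identities, which for short words reduces to matrix identities in $\mathrm{GL}_3$.

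First I would dispose of (5)–(14). In each of these cases the matrix $M_j$ (or $N_j$) is itself of order dividing $2$ while the group $VT_n$ is infinite, so one looks for a relation that the block matrices satisfy but that does not hold in $VT_n$. For instance, when $M_j=N_j=I_3$ (as in $\delta_{14}$, and for $M$ in several others) the generators $s_i$ (resp. $\rho_i$) all map to the identity, so any nontrivial word in those generators alone is in the kernel — and since $n\ge 4$ such words exist in $VT_n$. For the cases where $N_j$ has the shape $\begin{pmatrix}0&k&0\\ \tfrac1k&0&0\\ 0&0&1\end{pmatrix}$ or $\begin{pmatrix}1&k&0\\ \tfrac1k&0&0\\ 0&0&1\end{pmatrix}$ one checks directly that $N_j$ and the corresponding $M_j$ generate a finite group (or at least satisfy an extra relation), and then a commutator or a power that is trivial in the matrix group but, by the known structure of $VT_n$ from \cite{Bardakov2019}, nontrivial in $VT_n$, gives the kernel element. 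The bookkeeping here is a finite check over the listed $j$, using only that $VT_n$ contains a nonabelian free-ish subgroup on the relevant generators for $n\ge 4$.

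For (1)–(4) the idea is that the extra parameter constraint forces a coincidence $\delta(s_i)=\delta(\rho_i)$ (cases $3$,$4$: $d=\tfrac1p$ resp. $h=\tfrac1k$) or $\delta(\text{something})$ to have finite order (cases $1$,$2$: $e=1$). Concretely, in case (3) with $d=\tfrac1p$ one has $M_3=\begin{pmatrix}1&0&0\\ \tfrac1p&-1&p\\ 0&0&1\end{pmatrix}=N_3$, so the element $s_i\rho_i\in VT_n$ — which is nontrivial, since $s_i\ne\rho_i$ in $VT_n$ by the defining presentation — maps to the identity, and likewise for case (4). In cases (1) and (2) with $e=1$ the matrix $M_1$ (resp.\ $M_2$) becomes $\begin{pmatrix}1&0&0\\ 0&1&0\\ 0&h&-1\end{pmatrix}$-type and in particular $M_j^2=I_3$ together with a further collapse makes some word $w(s_i,\rho_i)$ that is nontrivial in $VT_n$ die; I would pin down $w$ by comparing $\delta(s_i\rho_{i+1})$ with $\delta(\rho_{i+1}s_i)$ or by using relation (\ref{eqs7}) to produce a short relator that becomes trivial only when $e=1$.

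The main obstacle is case (1)–(4): unlike the degenerate cases, here one must actually produce a specific nontrivial element of $VT_n$ in the kernel and \emph{prove} it is nontrivial in the group, which requires invoking structural facts about $VT_n$ (e.g. that $s_i$ and $\rho_i$ are distinct, that the $s_i$ generate a copy of $T_n$ faithfully inside $VT_n$, or a normal-form/quotient argument from \cite{Bardakov2019,Bardakov2019}). The matrix side is routine; the group-theoretic nontriviality claim is where care is needed. If a clean normal form is unavailable, the fallback is to map $VT_n$ onto a well-understood quotient (such as $T_n$ or the symmetric group, or the ``permutation'' quotient $\rho_i\mapsto s_i$) in which the proposed kernel element is visibly nontrivial, which suffices to conclude unfaithfulness.
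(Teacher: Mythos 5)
Your overall strategy is the same as the paper's: exhibit, for each $\delta_j$, an explicit nontrivial element of $VT_n$ killed by $\delta_j$. Cases (3) and (4) you handle exactly as the paper does ($d=\tfrac1p$, resp. $h=\tfrac1k$, forces $\delta(s_i)=\delta(\rho_i)$, and $s_i\neq\rho_i$ in $VT_n$), and the degenerate cases among (5)--(14) where $M_j=I_3$ are fine.

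The genuine gap is in cases (1) and (2), and in the non-degenerate part of case (5). For $\delta_1$ with $e=1$ and $\delta_2$ with $e=1$ you say you ``would pin down $w$'' by comparing $\delta(s_i\rho_{i+1})$ with $\delta(\rho_{i+1}s_i)$ or by using relation (\ref{eqs7}); neither of these actually yields a kernel element, and relation (\ref{eqs7}) is already preserved by every $\delta_j$, so it cannot produce a relator that ``becomes trivial only when $e=1$.'' The paper's kernel elements here are $(s_i\rho_{i+1})^4$ for $\delta_1$ and $(s_{i+1}\rho_i)^4$ for $\delta_2$ (and the same words for $\delta_5$, $\delta_6$; the words $(s_is_{i+1})^2$ for $\delta_9$, $\delta_{10}$, $\delta_{12}$). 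Note that $\delta(s_i)$ and $\delta(\rho_{i+1})$ act on \emph{overlapping} coordinate blocks (positions $i,i+1,i+2$ and $i+1,i+2,i+3$), so checking that such a word maps to $I_{n+1}$ is a genuine $4\times4$ computation depending on the parameters $h,p$ (resp. $d,k$), not a formal consequence of $M_j^2=I_3$; this is precisely the content your proposal defers. Your fallback for certifying nontriviality of the candidate words (project to $T_n$ or to the symmetric group) is sound and is more than the paper says, but without the words themselves the argument for (1), (2), and $j=5,6,9,10,12$ is incomplete.
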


\begin{proof}
We consider each case separately.
\begin{itemize}
\item[(1)] In the case $\delta$ is equivalent to $\delta_1$ and $e=1$, we have $\delta_1((s_i\rho_{i+1})^4)=I_{n+1}$ for all $1\leq i \leq n-2$ with $(s_i\rho_{i+1})^4$ are nontrivial elements in $VT_n$. Hence, $\delta_1$ is unfaithful and so $\delta$ is unfaithful.
\item[(2)] In the case $\delta$ is equivalent to $\delta_2$ and $e=1$, we have $\delta_2((s_{i+1}\rho_{i})^4)=I_{n+1}$ for all $1\leq i \leq n-2$ with $(s_{i+1}\rho_{i})^4$ are nontrivial elements in $VT_n$. Hence, $\delta_2$ is unfaithful and so $\delta$ is unfaithful.
\item[(3)] In the case $\delta$ is equivalent to $\delta_3$ and $d=\frac{1}{p}$, we have $\delta_3(s_i)=\delta_3(\rho_i)$ for all $1\leq i \leq n-1$ with $s_i\neq \rho_i$. Hence, $\delta_3$ is unfaithful and so $\delta$ is unfaithful.
\item[(4)] In the case $\delta$ is equivalent to $\delta_4$ and $h=\frac{1}{k}$, we have $\delta_4(s_i)=\delta_4(\rho_i)$ for all $1\leq i \leq n-1$ with $s_i\neq \rho_i$. Hence, $\delta_4$ is unfaithful and so $\delta$ is unfaithful.
\item[(5)] In the case $\delta$ is equivalent to $\delta_j, 5\leq j \leq 14$, we consider the following subcases.
\begin{itemize}
\item[•] If $\delta$ is equivalent to $\delta_5$ then we have $\delta_5((s_i\rho_{i+1})^4)=I_{n+1}$ for all $1\leq i \leq n-2$ with $(s_i\rho_{i+1})^4$ are nontrivial elements in $VT_n$. Hence, $\delta_5$ is unfaithful and so $\delta$ is unfaithful.
\item[•] If $\delta$ is equivalent to $\delta_6$ then we have $\delta_6((s_{i+1}\rho_{i})^4)=I_{n+1}$ for all $1\leq i \leq n-2$ with $(s_{i+1}\rho_{i})^4$ are nontrivial elements in $VT_n$. Hence, $\delta_6$ is unfaithful and so $\delta$ is unfaithful.
\item[•] If $\delta$ is equivalent to $\delta_7$ then we have $\delta_7(s_i)=I_{n+1}$ for all $1\leq i \leq n-1$. Hence, $\delta_7$ is unfaithful and so $\delta$ is unfaithful.
\item[•] If $\delta$ is equivalent to $\delta_8$ then we have $\delta_8(s_i)=I_{n+1}$ for all $1\leq i \leq n-1$. Hence, $\delta_8$ is unfaithful and so $\delta$ is unfaithful.
\item[•] If $\delta$ is equivalent to $\delta_9$ then we have $\delta_9((s_is_{i+1})^2)=I_{n+1}$ for all $1\leq i \leq n-2$ with $(s_is_{i+1})^2$ are nontrivial elements in $VT_n$. Hence, $\delta_9$ is unfaithful and so $\delta$ is unfaithful.
\item[•] If $\delta$ is equivalent to $\delta_{10}$ then we have $\delta_{10}((s_is_{i+1})^2)=I_{n+1}$ for all $1\leq i \leq n-2$ with $(s_is_{i+1})^2$ are nontrivial elements in $VT_n$. Hence, $\delta_{10}$ is unfaithful and so $\delta$ is unfaithful.
\item[•] If $\delta$ is equivalent to $\delta_{11}$ then we have $\delta_{11}(s_i)=I_{n+1}$ for all $1\leq i \leq n-1$. Hence, $\delta_{11}$ is unfaithful and so $\delta$ is unfaithful.
\item[•] If $\delta$ is equivalent to $\delta_{12}$ then we have $\delta_{12}((s_is_{i+1})^2)=I_{n+1}$ for all $1\leq i \leq n-2$ with $(s_is_{i+1})^2$ are nontrivial elements in $VT_n$. Hence, $\delta_{12}$ is unfaithful and so $\delta$ is unfaithful.
\item[•] If $\delta$ is equivalent to $\delta_{13}$ then we have $\delta_{13}(s_i)=I_{n+1}$ for all $1\leq i \leq n-1$. Hence, $\delta_{13}$ is unfaithful and so $\delta$ is unfaithful.
\item[•] If $\delta$ is equivalent to $\delta_{14}$ then we have $\delta_{14}(s_i)=I_{n+1}$ for all $1\leq i \leq n-1$. Hence, $\delta_{14}$ is unfaithful and so $\delta$ is unfaithful.
\end{itemize}
\end{itemize}
\end{proof}

Now, we study the faithfulness of all homogeneous $3$-local representations of $WT_n$ for all $n\geq 4$.

\begin{theorem}\label{fa3locWTn}
Consider $n\geq 4$ and let $\gamma: WT_n\to \mathrm{GL}_{n+1}(\mathbb{C})$ be a homogeneous $3$-local representation of $WT_n$. Then, $\gamma$ is unfaithful.
\end{theorem}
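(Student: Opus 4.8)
The plan is to invoke the classification in Theorem~\ref{3locWTn}, which reduces the statement to checking that each of the five representations $\gamma_1,\dots,\gamma_5$ is unfaithful, faithfulness being an invariant of the equivalence class. Three of these cases are immediate, and the other two require a short argument.

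For $\gamma_3$, $\gamma_4$, $\gamma_5$ one observes that the two defining blocks coincide: $M_3=N_3$, $M_4=N_4$, $M_5=N_5=I_3$. Hence $\gamma_j(s_i)=\gamma_j(\rho_i)$ for all $1\le i\le n-1$, so $\gamma_j$ identifies the generators $s_i$ and $\rho_i$. Since these are distinct in $WT_n$ --- their images already differ in the abelianization $WT_n^{\mathrm{ab}}\cong\mathbb{Z}_2\oplus\mathbb{Z}_2$ --- the map $\gamma_j$ is not injective. (For $\gamma_5$ one may simply note that it is the trivial representation.)

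For $\gamma_1$ and $\gamma_2$ the key observation is that every $\gamma_j(s_i)$ and every $\gamma_j(\rho_i)$ is a \emph{monomial} matrix, and moreover $\gamma_j(s_i)$ and $\gamma_j(\rho_i)$ have the \emph{same} underlying permutation, a single transposition depending only on $i$. Consequently the homomorphism ``underlying permutation'' composed with $\gamma_j$ agrees, up to relabeling of coordinates, with the canonical projection $\pi\colon WT_n\twoheadrightarrow S_n$ sending $s_i,\rho_i\mapsto(i,i+1)$. Therefore $\gamma_j$ maps the pure subgroup $PWT_n:=\ker\pi$ into the abelian group of invertible diagonal matrices, so that $(PWT_n)'\subseteq\ker\gamma_j$. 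Concretely, one checks by a direct computation that $\gamma_j\big((s_1s_2)^3\big)=I_{n+1}$ --- the only nontrivial block of $\gamma_j(s_1s_2)$ acts on three coordinates as a $3$-cycle scaled by units whose product is $1$, hence its cube is the identity --- and likewise $\gamma_j\big([s_1\rho_1,s_2\rho_2]\big)=I_{n+1}$, a product of commuting diagonal matrices. To conclude one needs such an element to be nontrivial in $WT_n$, equivalently $PWT_n$ to be non-abelian for $n\ge4$; this follows from the structure of $WT_n$ established in \cite{Bardakov2019} --- for instance $T_n$ embeds in $WT_n$, whence $s_1s_2$ has infinite order and in particular $(s_1s_2)^3\neq1$.

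The matrix work --- verifying $M_j=N_j$ in the first group of cases, identifying the permutation parts, and evaluating $\gamma_j$ on the words above --- is routine. The one non-routine input, which I expect to be the main obstacle, is the nontriviality in $WT_n$ of the element one places in $\ker\gamma_1\cap\ker\gamma_2$ (equivalently, that $PWT_n$ is non-abelian); this is handled by quoting the known structural facts about $WT_n$, just as the nontriviality of words like $(s_is_{i+1})^2$ is used in the proof of Theorem~\ref{fa3locVTn}.
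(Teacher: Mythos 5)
Your handling of $\gamma_3,\gamma_4,\gamma_5$ coincides with the paper's: $M_j=N_j$ forces $\gamma_j(s_i)=\gamma_j(\rho_i)$ while $s_i\neq\rho_i$, and your abelianization remark ($WT_n^{\mathrm{ab}}\cong\mathbb{Z}_2\oplus\mathbb{Z}_2$) supplies the justification of $s_i\neq\rho_i$ that the paper leaves implicit. For $\gamma_1,\gamma_2$ you diverge from the paper, and your computation is in fact the correct one. The paper asserts $\gamma_j(s_is_{i+1}s_i)=I_{n+1}$, but this is false: $\gamma_1(s_i)$ and $\gamma_1(s_{i+1})$ are monomial matrices whose underlying permutations are the overlapping transpositions $(i+1,i+2)$ and $(i+2,i+3)$, so $\gamma_1(s_is_{i+1}s_i)$ has underlying permutation $(i+1,i+3)\neq\mathrm{id}$ (similarly $(i,i+2)$ for $\gamma_2$). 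What is true is $\gamma_j(s_is_{i+1}s_i)=\gamma_j(s_{i+1}s_is_{i+1})$, which is exactly your statement $(s_is_{i+1})^3\in\ker\gamma_j$; your verification (a $3$-cycle scaled by units whose product is $1$, all other coordinates fixed) is correct. So on this case your route is not merely different from the paper's --- it repairs it.

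The price of the correct kernel element is the dependency you yourself flag: $(s_1s_2)^3\neq 1$ in $WT_n$. This is not free in the way the paper's (invalid) choice would have been: $s_is_{i+1}s_i$ is visibly nontrivial because it maps to a transposition under $WT_n\twoheadrightarrow S_n$, $s_i,\rho_i\mapsto(i,i+1)$, whereas $(s_1s_2)^3$ dies under that projection and under the abelianization, and the representations $\gamma_1,\gamma_2$ themselves only show that the order of $s_1s_2$ is divisible by $3$. So you genuinely need the structural input that $\langle s_1,\dots,s_{n-1}\rangle\leq WT_n$ is a copy of $T_n$ (equivalently that $s_1s_2$ has infinite order in $WT_n$); note that the obvious retractions $WT_n\to T_n$ one might try (sending $\rho_i$ to $1$ or to $s_i$) do not respect the mixed relations, so the embedding is not a formality. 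Your fallback element $[s_1\rho_1,s_2\rho_2]$ does not sidestep the issue, since its nontriviality in $WT_n$ is equally non-obvious. Provided the embedding $T_n\hookrightarrow WT_n$ is indeed available in \cite{Bardakov2019} or elsewhere, your argument is complete; otherwise this single step must be proved rather than cited, and it is the only substantive gap in an otherwise correct (and corrective) proof.
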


\begin{proof}
By Theorem \ref{3locWTn}, $\gamma$ is equivalent to one of the representations $\gamma_j$, $1\leq j\leq 5$. We consider each case separately.
\begin{itemize}
\item[(1)] In the case $\gamma$ is equivalent to $\gamma_j, j=1,2$, we have $\gamma_j(s_is_{i+1}s_i)=I_{n+1}$ for all $1\leq i \leq n-2$ with $s_is_{i+1}s_i$ are nontrivial elements in $WT_n$. Hence, $\gamma_j$ is unfaithful and so $\gamma$ is unfaithful.
\item[(2)] In the case $\gamma$ is equivalent to $\gamma_j, 3\leq j \leq 5$, we have $\gamma_j(s_i)=\gamma_j(\rho_i)$ for all $1 \leq i \leq n-1$ with $s_i\neq \rho_i$. Hence, $\gamma_j$ is unfaithful and so $\gamma$ is unfaithful.
\end{itemize}
\end{proof}

\vspace*{-0.4cm}

\section{Declaration section}
\subsection{Funding} Not Applicable.
\subsection{Ethical approval} Not Applicable.
\subsection{Informed consent} Not Applicable.
\subsection{Author Contributions} Not Applicable.
\subsection{Data Availability} Not Applicable.
\subsection{Conflict of Interest} The author declares that there is no conflict of interest.
\subsection{Clinical Trial Number} Not Applicable.

\vspace*{0.1cm}


\begin{thebibliography}{99}  

\bibitem{E.A} E. Artin, \emph{Theorie der z{\"o}pfe}, {Abhandlungen Hamburg}, 4, 47-72, (1926).

\bibitem{V.B2016} V. Bardakov and P. Bellingeri, \emph{On representation of braids as automorphisms of free groups and corresponding linear representations}, {Knot Theory and Its Applications, Contemp. Math.}, Amer. Math. Soc., Providence, 670, (2016), 285-298.

\bibitem{Bardakov2019} V. Bardakov, M. Singh, and A. Vesnin, \emph{Structural aspects of twin and pure twin groups}, {Geometriae Dedicata}, 203, (2019), 135-154.

\bibitem{Bellingeri2024} P. Bellingeri, H. Chemin, and V. Lebed, \emph{Cactus groups, twin groups, and right-angled Artin groups}, {J. Algebr. Comb.}, 59, (2024), 153–178.

\bibitem{W.B} W. Burau, \emph{Braids, uber zopfgruppen and gleichsinnig verdrillte verkettungen}, {Abh. Math. Semin. Hamburg Univ}, 11, (1936), 179-186.

\bibitem{chreif2024} M. Chreif and M. Dally, \emph{On the irreducibility of local representations of the braid group $B_n$}, {Arab. J. Math.}, 13, (2024), 263–273.

\bibitem {E.F} E. Formanek, \emph{Braid group representations of low degree}, {Proc. London Math Soc.}, 73 (3), (1996), 279-322.

\bibitem{Prab} V. Keshari, M. Nasser, and M. Prabhakar, \emph{On representations of the multi-virtual braid group $M_kVB_n$ and the multi-welded braid group $M_kWB_n$}, (2025), arXiv:2508.04168.

\bibitem{Khovanov1996} M. Khovanov, \emph{Real $K(\pi,1)$ arrangements from finite root systems}, {Math. Res. Lett.}, 3, (1996), 261-274.

\bibitem{Khovanov1997} M. Khovanov, \emph{Doodle groups}, {Trans. Amer. Math. Soc.}, 349, (1997), 2297-2315.

\bibitem{Law90} R. Lawrence, \emph{Homological representations of the Hecke algebra}, {Comm. Math. Phys.}, 135 (1), (1990), 141–191.

\bibitem{Mayasi2025} T. Mayassi and M. Nasser, \emph{Classification of homogeneous local representations of the singular braid monoid}, (2025), arXiv:2501.13404.

\bibitem{Mayasi20251} T. Mayassi and M. Nasser, \emph{On the classification and irreducibility of $2$-local representations of the twin group $T_n$}, (2025), arXiv:2508.14505.

\bibitem{Merkov1999} A. Merkov, Vassiliev invariants classify flat braids, \emph{Differential and symplectic topology of knots and curves}, volume 190 of Amer. Math. Soc. Transl. Ser. 2, pages 83–102. Amer. Math. Soc., Providence, RI, 1999.

\bibitem{Mikha2013} Y. Mikhalchishina, \emph{Local representations of braid groups}, {Sib. Math. J.}, 54 (4), (2013), 666–678.

\bibitem{Mosto2020} J. Mostovoy and C. Roque-Márquez, \emph{Planar pure braids on six strands}, {J. Knot Theo. Rami.}, 29 (1), (2020), 1950097.

\bibitem{M.N2024} M. Nasser, \emph{Necessary and sufficient conditions for the irreducibility of a linear representation of the braid group $B_n$}, {Arab. J. Math.}, 13, (2024), 333-339.

\bibitem{M.M.M} M. Nasser, M. Chreif, and M. Dally, \emph{Local representations of the flat virtual braid group}, (2025), arXiv:2503.06607.

\bibitem{Nas20251} M. Nasser, {\it Local extensions and $\Phi$-type extensions of some local representations of the braid group $B_n$ to the singular braid monoid $SM_n$}, {Vietnam Journal of Mathematics}, (2025) 1-12.

\bibitem{M.N.twin} M. Nasser, \emph{Twin groups representations}, (2025), arXiv:2507.15005.

\bibitem{Naik2020} T. Naik, N. Nanda, and M. Singh, \emph{Some remarks on twin groups}, {J. Knot Theo. Rami.}, 29 (10), (2020), 2042006.

\bibitem{Shabat1990} G. Shabat and V. Voevodsky, \emph{Drawing curves over number fields}, {The Grothendieck Festschrift}, Vol. III, 199-227, Progr. Math., 88, Birkhuser Boston, Boston, MA, 1990.

\bibitem{D.T} D. Tong, S. Yang and Z. Ma, \emph{A new class of representations of braid groups}, {Comm. Theoret. Phys.}, 26 (4), (1996), 483-486.

\bibitem{wada} M. Wada, \emph{Group invariants of links}, {Topology}, 31 (2), (1992), 399–406.

\end{thebibliography}
\end{document}